\documentclass[11pt, reqno]{amsart}

\usepackage{pstricks}
\usepackage{times}
\usepackage{comment}
\usepackage{xcolor}

\usepackage[numbers, sort&compress]{natbib}

\usepackage[breaklinks]{hyperref}

\usepackage{amsmath,amssymb,amsthm,mathtools}
\usepackage{hyperref}
\usepackage{enumitem}

\hypersetup{colorlinks=true, linkcolor=blue, citecolor=blue, urlcolor=blue}

\usepackage[utf8]{inputenc}
\usepackage{amsmath}
\usepackage{amsfonts}
\usepackage{bbm}
\usepackage{mathtools}
\usepackage{tikz}
\usetikzlibrary{decorations.pathreplacing}
\usepackage{amsthm}
\usepackage[english]{babel}
\usepackage{fancyhdr}
\usepackage{amssymb}
\usepackage{enumitem}
\usepackage{qtree}
\usepackage{mathrsfs}

\numberwithin{equation}{section}

\newtheorem{definition}{Definition}
\newtheorem{theorem}{Theorem}
\newtheorem{proposition}{Proposition}
\newtheorem{lemma}{Lemma}

\theoremstyle{remark}
\newtheorem{remark}{Remark}

\newcommand{\R}{\mathbb{R}}

\newcommand{\E}{\mathbb{E}}
\newcommand{\Prob}{\mathbb{P}}
\newcommand{\1}{\mathbf{1}}

\newcommand{\SO}{\mathrm{SO}}
\newcommand{\so}{\mathfrak{so}}
\newcommand{\Id}{\mathrm{Id}}
\newcommand{\HS}{\mathrm{HS}}
\newcommand{\op}{\mathrm{op}}
\newcommand{\Law}{\mathcal{L}}
\newcommand{\TV}{\mathrm{TV}}
\newcommand{\KL}{\mathrm{KL}}
\newcommand{\Tr}{\mathrm{Tr}}
\newcommand{\Ad}{\mathrm{Ad}}
\newcommand{\diag}{\mathrm{diag}}
\newcommand{\mix}{\mathrm{mix}}
\newcommand{\dd}{\mathrm{d}}
\newcommand{\eps}{\varepsilon}
\newcommand{\ip}[2]{\langle #1,#2\rangle}
\newcommand{\norm}[1]{\left\lVert #1\right\rVert}
\newcommand{\abs}[1]{\left\lvert #1\right\rvert}

\begin{document}
\title[Kac's walk on rotation matrices mixes in $n^2 \log n$ steps]{Kac's walk on rotation matrices mixes in $n^2 \log n$ steps}
\author{Natesh S. Pillai}

\address{Department of Statistics, Harvard University}
\email{pillai@fas.harvard.edu}

\author{Aaron Smith}
\address{Department of Mathematics and Statistics, University of Ottawa}
\email{asmi28@uOttawa.ca}

\keywords{Kac's random walk, Mixing time, Coupling, Malliavin calculus, Random Matrix}
\subjclass[2020]{Primary 60J10; Secondary 60J20}

\begin{abstract}
Kac’s walk on $\SO(n)$, introduced in \cite{hastings70}, is an important high-dimensional Markov chain with applications in statistical physics, statistics, cryptography, and computational sciences. Despite its simple transition rules, determining its total-variation mixing time has remained a  challenging problem for decades. A key obstacle is that the walk is not conjugation-invariant, placing it beyond the reach of classical Fourier-analytic techniques that apply to many related random walks on compact groups.

We prove that Kac’s walk mixes in total variation in $O(n^2 \log n)$ steps, matching the conjectured mixing time up to constants. The proof is based on a refined two-stage coupling. Building on our earlier work \cite{PillaiSmith}, the first stage contracts two copies of the chain to a small neighborhood via a Wasserstein coupling defined and analyzed in \cite{Oliveira}. 

Our main contribution is a new framework for analyzing the second-stage coupling. It can be viewed as a discrete analogue of Malliavin calculus for Markov chains. We represent the law of the chain as the pushforward of high-dimensional noise and prove quantitative non-degeneracy of the associated linearization using matrix martingale methods. This yields an approximately Gaussian distribution in the Lie algebra with well-conditioned covariance, allowing small group translations to be absorbed at negligible cost in total variation. Our approach provides a new general framework for studying mixing in high dimensional Markov chains in continuous state space with singular transition kernels.

\end{abstract}
\maketitle

\section{Introduction}

Introduced by Kac in the 1950s, Kac's random walk on the sphere is one of the canonical stochastic models for the Boltzmann equation \cite{kac1956foundations,MischlerMouhot2013}. The analogous walk on the special orthogonal group $\SO(n)$ was defined in the seminal paper introducing Markov chain Monte Carlo to statistics, where it was used as the most-developed sample algorithm \cite{hastings70}. Although the modern theory of mixing times had not been developed when  \cite{hastings70} was written, the last sentence of that paper asks essentially the same question that we study in this paper: how dependent are widely-spaced samples from Kac's walk, and how does this relate to computational efficiency? 

Since 1970, Kac-type dynamics have become a natural test case for a basic question that appears across probability, statistics, and computation: how rapidly can a high-dimensional constrained system be randomized using only local low-dimensional updates? 
The simplicity of the dynamics is an asset when using the walk as an algorithm: it makes both single-step computations (as in \cite{hastings70}) and downstream applications (as in \cite{sotirakitrap,JainPillaiSahSawhneySmith2022,VaikuntanathanZamir2025}) faster. The same simplicity, however, makes the walk harder to analyze than superficially similar walks on the same group \cite{HoughJiang,Porod,Rosenthal}. Kac-like dynamics have continued to be used in Bayesian statistics \cite{bornn2019moment}, and are important in developing Gibbs samplers for distributions with hard constraints \cite{Oliveira,PillaiSmith} and especially orthogonality constraints \cite{JauchHoffDunson2020}. More recently, the same dynamics have been used as steps in other algorithms, including fast dimension reduction \cite{AilonChazelle06,JainPillaiSahSawhneySmith2022}, trapdoored pseudorandom matrices in cryptography \cite{sotirakitrap}, parallel Kac-type constructions in quantum pseudorandomness \cite{LuQinSongYaoZhao2024,LuQinSongYaoZhao2025}, other applications in machine learning \cite{VaikuntanathanZamir2025}, and as a tool to prove stronger results in other areas of probability \cite{NgWalters15}.

Let $\SO(n)$ denote the group of $n\times n$ rotation matrices and let $N=\binom{n}{2}$ be the dimension of $\SO(n)$. Fix an ordering of the $N$ coordinate pairs
\[
[n]_{\neq}=\{(a,b):1\le a<b\le n\}.
\]
For $i\in\{1,\dots,N\}$ and $\theta\in[0,2\pi)$, let $R(i,\theta)$ denote the rotation by angle $\theta$ in the coordinate plane associated with the $i^\mathrm{th}$ element of $[n]_{\neq}$. The chain $\{X_t\}_{t\ge0}$ evolves by
\begin{equation}
\label{eq:kac-update}
X_{t+1}=R(i_t,\theta_t)X_t,
\end{equation}
where $i_t$ is uniform on $\{1,\dots,N\}$ and $\theta_t$ is uniform on $[0,2\pi)$. Thus each step acts in only one two-dimensional coordinate plane, while the stationary distribution is Haar measure on a manifold of dimension $N$. The closely-related walk on the sphere is obtained by taking the first column of $X_{t}$; this is also a Markov chain.

The total variation mixing time for Kac's walk on $\SO(n)$ has been a long standing open problem. The best bound to date,
\[
\tau_{\mix}=O(n^4\log n),
\]
was obtained in \cite[Theorem~1.1]{PillaiSmith}. Combined with the $\Omega(n^2)$ lower bound from \cite{PillaiSmith}, this left a factor $n^2$ gap in the main quantitative question for the chain. In this paper, we close this gap and prove an upper bound that is tight up to a factor of $O(\log(n))$:

\begin{theorem}
\label{thm:main}
The total-variation mixing time of Kac's walk on $\SO(n)$ satisfies
\[
\tau_{\mix}=O(n^2\log n).
\]
\end{theorem}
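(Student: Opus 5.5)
The plan is a two-stage coupling, following the outline in the abstract. Let $X_t, Y_t$ be two copies of the walk, with $Y_0$ drawn from Haar measure, so $Y_t$ is Haar for every $t$. It suffices to build a coupling with $\Prob[X_T \neq Y_T] \le 1/4$ for some $T = O(n^2\log n)$.

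\textbf{Stage one (contraction).} For the first $T_1 = C n^2 \log n$ steps I use the Wasserstein (contractive) coupling of \cite{Oliveira}, in the refined form from \cite{PillaiSmith}. Both chains use the same coordinate plane $i_t$, and the angles are coupled so that the Hilbert--Schmidt distance contracts in expectation by a factor $1 - c/n^2$ per step. After $T_1$ steps this gives
\[
\E\,\norm{X_{T_1} - Y_{T_1}}_{\HS} \le n^{-K}
\]
for any fixed $K$, by taking $C$ large. Markov's inequality then puts us, with probability at least $1 - o(1)$, in the event $X_{T_1} = e^{A} Y_{T_1}$ with $A \in \so(n)$ and $\norm{A}_{\HS} \le n^{-K/2}$.

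\textbf{Stage two (absorbing the small translation).} Run both chains for a further $T_2 = C' n^2 \log n$ steps. I view the law of $Y_{T_1+T_2}$ given $Y_{T_1}=y$ as the pushforward of the noise $(i_s,\theta_s)_{s \le T_2}$ under the map $\Phi(\theta;i) = R(i_{T_2},\theta_{T_2}) \cdots R(i_1,\theta_1)\, y$. The goal is to show that
\[
\norm{\Law(\Phi\, e^{A}) - \Law(\Phi)}_{\TV}
\]
is small; right-translation by $e^A$ moves $y$ to $e^A y$, and since left and right translations both fix Haar measure, this handles the discrepancy. The Malliavin-style step has four parts.
\begin{enumerate}
\item Fix the index sequence $i$ and treat the angles as the randomness. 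Perturb the angles by $\delta\theta$, and write the first-order variation of $\Phi$ in Lie-algebra coordinates as $\sum_s \delta\theta_s\, \Ad_{P_s}(E_{i_s})$. Here $E_{i_s}$ is the generator of rotation in the plane $i_s$, and $P_s$ is the partial product to the right of step $s$, viewed through the appropriate adjoint convention.
\item Let $M = \sum_s v_s v_s^{\top}$, with $v_s$ the vectorized $\Ad_{P_s}(E_{i_s})$, be the Malliavin matrix on $\so(n) \cong \R^N$. I need $\lambda_{\min}(M) \gtrsim T_2/N$ with high probability, so $M$ is well conditioned.
\item Using the map $\delta\theta \mapsto \Phi$, absorb the translation $e^A$ by a change of variables on the angle noise: choose $\delta\theta = J^{\top} M^{-1} A$, where $J$ is the Jacobian with columns $v_s$. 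The angles are uniform on the circle, so this shift has zero TV cost up to the Jacobian factor.
\item Control the second-order error by Taylor expansion, using $\norm{A} \le n^{-K/2}$ with $K$ large.
\end{enumerate}
Since the angles are exactly uniform and independent, the shifted noise has the same law except for a determinant factor close to one. Equivalently, one can smooth by a Gaussian on $\so(n)$ with covariance $M$ and compare.

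\textbf{Main obstacle.} The key difficulty is the quantitative non-degeneracy $\lambda_{\min}(M) \ge c\, T_2/N$ with failure probability $o(1)$. My approach is to write $M - \E[\cdot\mid\text{past}]$ as a sum of matrix martingale differences. The conditional expectation of $v_s v_s^{\top}$ given the past is $\frac1N \sum_i \Ad_{P}(E_i)\Ad_P(E_i)^{\top}$, which is exactly $\frac1N \Id$ on $\so(n)$ because the $E_i$ form an orthonormal basis and $\Ad_P$ is orthogonal for the Killing/HS inner product. The mean is therefore $\frac{T_2}{N}\Id$ exactly, and matrix Freedman (or Tropp's matrix martingale inequality) bounds the fluctuation by roughly
\[
\sqrt{\frac{T_2}{N}\,\log N} + \log N,
\]
since each increment has operator norm at most $1$. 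This is $o(T_2/N)$ once $T_2 \ge C N \log N \asymp n^2 \log n$. The conditioning ordering and the adjoint convention need care, because perturbations act on the left at different times.

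The remaining technical step is bounding the nonlinear remainder. I also need to show that conditioning on good events (well-conditioned $M$, and smallness from stage one) costs only $o(1)$ in TV. The total time is then $T_1 + T_2 = O(n^2\log n)$, which gives the bound in Theorem~\ref{thm:main}.
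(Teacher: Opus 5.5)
\textbf{Where the proposal agrees with the paper.} Your overall architecture is the paper's. Stage one uses Oliveira's coupling to reach inverse-polynomial distance in $O(N\log n)$ steps. Stage two runs a block of length $\asymp N\log N$ and bounds its Malliavin-type matrix $M=\sum_s v_sv_s^\top$ from below, using the conditional isotropy $\E[v_sv_s^\top\mid\cdot\,]=\frac1N I_N$ and Tropp's matrix Freedman inequality. Your right-trivialized convention is a valid alternative to the paper's backward filtration: there $v_s=\Ad(Q_{s-1}^{-1})a_{i_s}$ depends only on earlier steps, so the martingale runs forward.

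\textbf{The gap: steps 3--4.} These are the steps that actually have to produce the total-variation bound, and as written they do not. Write $h$ for your small Lie-algebra element.

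First, the linear shift does not match exactly. Taking $\delta\theta=J^\top M^{-1}h$ gives $\Phi(\theta+\delta\theta)=\Phi(\theta)e^{h}e^{\epsilon(\theta)}$ with a residual of size $\|\epsilon(\theta)\|_{\HS}=O(N\|h\|_{\HS}^2)$. A Taylor bound on $\epsilon$ does not turn into a TV bound. Absorbing the $\theta$-dependent translation $e^{\epsilon(\theta)}$ is the same problem you started with, only smaller, and the law of $\Phi$ has no quantitative smoothness to lean on. To make a change of variables on the torus work, you must solve $\Phi(\psi(\theta))=\Phi(\theta)e^{h}$ exactly. That needs a quantitative inverse-function or Newton argument using $\lambda_{\min}(M)$ and polynomial bounds on second derivatives.

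Second, you must then control $\det D\psi$, and ``zero TV cost up to the Jacobian factor'' hides real work here. The shift depends on $\theta$, and $J$ and $M$ vary at order one with the base angles. So $\partial_\theta(J^\top M^{-1}h)$ is only $O(\|h\|_{\HS}\,\mathrm{poly}(N))$, not small per coordinate. You need this bound, and injectivity of $\psi$, uniformly on a neighbourhood of the good set $\{\lambda_{\min}(M)\ge cT_2/N\}$, not just at typical $\theta$.

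\textbf{How the paper closes this.} It does not use the base angles as the smoothing source at all. By Lemma~\ref{lem:marginal}, adding independent noise to uniform angles leaves the law unchanged. So the paper freezes the whole block (indices and angles) and perturbs the angles by $\Delta\sim\mathcal N(0,\sigma_n^2 I_m)$ with $\sigma_n=N^{-3}$. It studies $\delta\mapsto\log H_A(\delta)$ only on the ball $\|\delta\|_2\lesssim\sigma_n\sqrt m$. On that ball $J_A$ is a fixed matrix and the remainder is $O(m\|\delta\|_2^2)$.

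Proposition~\ref{prop:gaussian-approx} then shows that $\log H_A(\Delta)$ is TV-close to $\mathcal N(0,\sigma_n^2M_A)$. Proposition~\ref{prop:gaussian-bch} absorbs $e^h$ at two costs: an explicit Gaussian shift term $\|\Sigma_A^{-1/2}h\|_{\HS}$, and a BCH term of order $\sqrt N\|h\|_{\HS}$. Both are $o(1)$ for $\|h\|_{\HS}\le c_{\log}N^{-4}$.

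Your closing remark about smoothing by a Gaussian on $\so(n)$ points in this direction. Without the auxiliary-noise device, though, there is no such Gaussian available to compare with. Either route can be completed, since stage one gives any inverse-polynomial $\|h\|_{\HS}$, but as written your step 4 does not yield a TV bound.
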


A nearly-matching lower bound of $\Omega(n^{2})$ can be obtained by dimension-counting: $\SO(n)$ has dimension $N$, while each step is constrained to a union of 1-dimensional submanifolds. We conjecture that our upper bound of $O(n^{2} \log(n))$ is tight.

\subsection{Our contribution and its connection to Malliavin calculus}

There is a large literature on bounding the mixing times of Markov chains in many contexts. The theory is best developed for chains on discrete spaces \cite{levin2017markov}, while in continuous state spaces the best available results are, as noted in \cite{Lacoin2021ParticleSystems}, “very far from being as nice as in the finite case”; see also the discussions in \cite{CaputoLabbeLacoin2020AdjacentSimplex,LabbePetit2025BiasedAdjacentSimplex,CaputoLabbeLacoin2022GibbsNablaPhi}.

A key difficulty in continuous state spaces is the lack of a general mechanism for upgrading convergence in weaker metrics to total variation. In contrast, in discrete settings this upgrade typically follows from a spectral gap bound at logarithmic cost in the inverse of the smallest stationary probability. For Markov chains like Kac's walk that make low-dimensional updates, it is well-known that knowledge of the highest eigenvalue ``does not improve the mixing time" in general \cite{PakSidenko}. Previous work on these upgrades has either taken advantage of symmetries that apply to very narrow families of Markov chains \cite{Rosenthal,Porod,HoughJiang} or has effectively sought a ``warm start" (introduced in \cite{LovaszSimonovits1993RandomWalksConvexBody}) via truncation or perturbation estimates \cite{JiangSO,PillaiSmith}.

The main difficulty in proving total-variation mixing for Kac's walk is not the absence of
contraction.  Wasserstein contraction is already known at the conjectural time scale by
Oliveira's coupling \cite{Oliveira}.  The difficulty is that the transition kernel is singular.
At each step the chain moves only along a one-dimensional family of rotations, while the
state space $\SO(n)$ has dimension $N=\binom n2$.  Thus, for short and moderate times,
the usual smoothing mechanisms available for Markov chains with everywhere positive
transition densities are absent.  One must instead understand how many singular, highly
structured updates combine to create a distribution that is spread in all tangent directions,
and one must do so quantitatively enough to compare nearby translates in total variation.

This problem is closely analogous to the classical problem of hypoellipticity.
Consider the Stratonovich stochastic differential equation
\begin{equation}
\label{eq:classical-hypoelliptic-sde}
    \dd X_t
    =
    V_0(X_t)\,\dd t
    +
    \sum_{\alpha=1}^r V_\alpha(X_t)\circ \dd W_t^\alpha ,
\end{equation}
on a smooth manifold \(M\), where \(V_0,V_1,\ldots,V_r\) are smooth vector
fields and \(W^\alpha\) are independent Brownian motions. Its generator is
\[
    \mathcal L
    =
    V_0+\frac12\sum_{\alpha=1}^r V_\alpha^2.
\]
Even when the noise vector fields $V_1,\ldots,V_r$ fail to span the tangent space pointwise, their iterated Lie brackets can generate the missing directions. Hörmander’s theorem states that if, for every $x \in M$, the Lie algebra generated by $V_1,\ldots,V_r$ together with their commutators with the drift $V_0$ spans the full tangent space $T_xM$, then the associated diffusion has smooth transition densities; equivalently, $\partial_t - \mathcal{L}$ is hypoelliptic \cite{Hormander1967,Hairer2011Hormander}.

Malliavin calculus gives a probabilistic route to obtaining this regularity
\cite{Malliavin1978,Nualart2006,Hairer2011Hormander,KusuokaStroock1984,KusuokaStroock1985,Norris1986}.
Differentiation of the solution map $X_t$ with respect to the driving noise yields a
linearized flow. If \(J_{s,t}:T_{X_s}M\to T_{X_t}M\) denotes this flow, then, in
local coordinates,
\[
    \dd J_{s,t}
    =
    DV_0(X_t)J_{s,t}\,\dd t
    +
    \sum_{\alpha=1}^r DV_\alpha(X_t)J_{s,t}\circ \dd W_t^\alpha,
    \qquad
    J_{s,s}=\Id .
\]
The Malliavin derivative of the endpoint in the \(\alpha\)-th noise direction at
time \(s\) is
\[
    D_s^\alpha X_t
    =
    J_{s,t}V_\alpha(X_s),
    \qquad 0\le s\le t.
\]
The associated Malliavin covariance matrix is
\begin{equation}
\label{eq:classical-malliavin-covariance}
    \mathcal M_t
    =
    \sum_{\alpha=1}^r
    \int_0^t
    \bigl(J_{s,t}V_\alpha(X_s)\bigr)
    \bigl(J_{s,t}V_\alpha(X_s)\bigr)^\top
    \,\dd s .
\end{equation}

Non-degeneracy of \(\mathcal M_t\) means that the injected Brownian noise has
reached every tangent direction by time \(t\). Under Hörmander's condition, it turns 
out that \(\mathcal M_t\) is invertible, and Malliavin integration by parts
then converts this covariance information into regularity of the law; see, for
example, Equation~3.4 of \cite{Hairer2011Hormander}. The classical scheme is
therefore to differentiate the endpoint with respect to the noise, establish
non-degeneracy of the resulting covariance, and then use integration by parts to
deduce regularity.

Our argument follows a similar overall pattern, but in a discrete, finite-dimensional setting tailored to Markov chains with singular transition kernels. In place of a Brownian path and an infinitesimal generator satisfying the celebrated Hörmander bracket condition, the randomness enters through the sequence of coordinate planes and rotation angles. We introduce a small Gaussian perturbation of the angles in a fresh block of updates, which plays the role of the driving noise in this setting.

More precisely, after a first-stage Wasserstein coupling due to Oliveira
\cite{Oliveira}, two copies of the walk can be brought within a very small
Hilbert--Schmidt distance after \(O(N\log n)\) steps, with high probability.
This produces closeness, but not coalescence.  To upgrade this to total-variation
overlap, we expose a fresh block
\[
    A=((i_1,\theta_1),\ldots,(i_m,\theta_m)),
    \qquad
    m=\lceil C_1N\log N\rceil,
\]
and perturb all angles in this block by an independent Gaussian vector
\[
    \delta=(\delta_1,\ldots,\delta_m)\sim \mathcal N(0,\sigma_n^2 I_m).
\]
Adding such a perturbation preserves the marginal law of Kac's walk, because the
base angles are uniform modulo \(2\pi\).  Thus the perturbation gives us a source
of smoothing without changing the Markov chain.

The endpoint perturbation is encoded by
\[
    H_A(\delta)=L_A(\delta)L_A(0)^{-1},
    \qquad
    u_A(\delta)=\log H_A(\delta)\in\so(n),
\]
where \(L_A(\delta)\) is the product of the perturbed rotations.  The map
\[
    u_A:\R^m\to\so(n)
\]
is the finite-dimensional analogue of the endpoint map in Malliavin calculus.
Its derivative at the origin,
\[
    J_A=\dd u_A(0),
\]
plays the role of the Malliavin derivative, and
\[
    M_A=J_AJ_A^\top
\]
plays the role of the Malliavin covariance matrix.

The key estimate in the paper is a quantitative non-degeneracy bound for \(M_A\).
The columns of \(J_A\) are transported coordinate-plane directions.  Thus \(M_A\)
is a sum of rank-one projections onto these transported directions, directly
mirroring the classical covariance formula
\eqref{eq:classical-malliavin-covariance}.  The important new observation is that,
after conditioning on the future of the block and reversing time, these rank-one
terms have a matrix martingale difference structure.  Tropp's matrix Freedman
inequality then gives, with high probability,
\[
    \frac{m}{N}I_N
    \preceq
    M_A
    \preceq
    3\frac{m}{N}I_N.
\]
This is the discrete analogue of non-degeneracy of the Malliavin covariance:
although each update is one-dimensional, the transported update directions spread
across all \(N=\dim\SO(n)\) tangent directions over a block of length
\(O(N\log N)\).

Once this covariance estimate is available, the remainder of the second stage is
perturbative.  On the scale of the Gaussian perturbation, the logarithmic map has
the expansion
\[
    u_A(\delta)=J_A\delta+r_A(\delta),
\]
with a quadratic remainder.  Hence the law of \(u_A(\delta)\) is close in total
variation to a non-degenerate Gaussian measure on \(\so(n)\) with covariance
\(\sigma_n^2M_A\).  Small right translations of the group then become, in
logarithmic coordinates, small shifts of this Gaussian together with a controlled
Baker--Campbell--Hausdorff error.  Since the first-stage coupling has already
made the two chains \(N^{-4}\)-close, the resulting translation cost is \(o(1)\).

This perspective also explains the improvement over our previous work
\cite{PillaiSmith}.  There, the second stage perturbed only \(N\) carefully
chosen updates and required a delicate lower bound on the smallest singular value
of a square random Jacobian.  Here we perturb all updates in a shorter block of
length \(O(N\log N)\).  The resulting Jacobian is rectangular, but its covariance
has a clean martingale structure, making the random-matrix analysis substantially
more tractable.

Although several estimates in the present paper use special features of Kac's
walk, the general mechanism is more flexible; see Remark \ref{rem:keymallrm}.
Therefore we expect this point of view
to be useful for other high-dimensional Markov chains on continuous state spaces
with singular transition kernels.

Finally, our proof is conceptually related to the mechanism behind ergodicity for
stochastic systems with degenerate forcing, as in \cite{HairerMattingly2006}.  In
both settings, randomness is injected only in a restricted set of directions, and
one studies how the dynamics transports infinitesimal perturbations of the random
input.  In \cite{HairerMattingly2006}, this leads to asymptotic smoothing despite
a genuinely degenerate Malliavin matrix.  In our finite-dimensional setting, the
same philosophy yields a non-degenerate Gaussian approximation in logarithmic
coordinates.
\subsection{Related Work}
Much of the early work on mixing times of Markov chains focused on random walks on groups \cite{DiaconisShahshahani1981,Aldous1983}. For conjugation-invariant random walks on compact groups, powerful Fourier-analytic methods are available, and in several cases they lead to sharp total-variation estimates and cutoff phenomena \cite{Diaconiscut}, including for random walks on groups that are similar to Kac's walk \cite{Rosenthal,Porod,HoughJiang}. Kac's walk on $\SO(n)$ is different: its transition rule is tied to the ambient coordinate axes and is not conjugation invariant, so the classical representation-theoretic approach does not apply directly. 

The first line of work on Kac's walks studied their spectral gaps. For the walk on the sphere, Janvresse identified the spectral gap up to constants \cite{Jan01}. For the walk on $\SO(n)$, Diaconis and Saloff-Coste gave the first bounds that were polynomial in $n$ using a comparison argument, and Janvresse showed that the spectral gap is of order $n^{-2}$ \cite{DSC00,Jan03}. The exact spectral gap for both the sphere walk and the $\SO(n)$ walk was then identified by \cite{CCL03}, and \cite{Mas03} computed the full spectrum; \cite{Cap08} later gave a streamlined and more general spectral-gap argument for Kac-type collision processes.  Even with exact computations of the spectral gap and even the full spectrum, the lack of conjugacy-invariance meant that it was difficult to obtain bounds on the mixing time in total variation. As illustration, the first paper to obtain strong estimates of the spectral gap also gave the first bounds on the mixing time of the walk, but was unable to get a bound stronger than $e^{O(n^{2})}$ \cite{DSC00}. Despite the full characterization of the spectrum in the following decade, this estimate was not improved to a polynomial mixing bound until \cite{JiangSO}.

In parallel to work on estimating the spectral gaps of Kac's walks, there was work on estimating their mixing time in stronger metrics. On the sphere, the first polynomial upper bound on the mixing time was computed in \cite{Jiang2012KacSphere}, and the correct mixing time was computed by the present authors in \cite{PillaiSmith2017KacSphere}. On $\SO(n)$, The first strong result was due to Pak and Sidenko, who proved an $O(n^{5/2}\log n)$ weak-convergence bound \cite{PakSidenko}. Oliveira later showed that the walk mixes in Wasserstein distance in $O(n^2\log n)$ steps, within a logarithmic factor of optimal \cite{Oliveira}. The first polynomial upper bound on the mixing time was computed in \cite{JiangSO}, and the present authors later improved this to $O(n^4\log n)$ \cite{PillaiSmith}. The present note improves the estimate to $O(n^2\log n)$, which is conjectured to be the correct answer.

There is also substantial work on closely related random walks on compact groups. Rosenthal analyzed random fixed-angle rotations on $\SO(N)$ using character theory, Porod studied the random-reflection walk, and Hough and Jiang proved cutoff for the uniform-plane Kac walk, a conjugation-invariant variant in which the rotation plane is sampled uniformly from the Grassmannian \cite{Rosenthal,Porod,HoughJiang}. 

\section{Notation and Geometry of $\SO(n)$}
\label{sec:notation}

\subsection{Basic Notation}
Let $K$ denote the one-step transition kernel of Kac's walk and let $K^{t}$ denote the $t$-step transition kernel. Let $\mu$ denote normalized Haar measure on $\SO(n)$, which is the stationary distribution of Kac's walk. In this notation, and in most of the paper, we suppress dependence on $n$ unless it is necessary.

We  write $\Law(Z)$ for the law of a random variable $Z$. For a distribution $\mu$, we write $Z \sim \mu$ as shorthand for $\Law(Z) = \mu$. For probability measures $\nu_1,\nu_2$ on the same measurable space, we write
\[
\norm{\nu_1-\nu_2}_{\TV}=\sup_A \abs{\nu_1(A)-\nu_2(A)}.
\]
The total-variation mixing time of Kac's walk is
\[
\tau_{\mix}=\inf\Bigl\{t\ge 0: \sup_{x\in \SO(n)} \norm{K^t(x,\cdot)-\mu}_{\TV}\le \frac14\Bigr\}.
\]
For $d$ a metric on $\SO(n)$, set
\[
W_{d,2}(\nu_1,\nu_2)=\inf\Bigl\{\bigl(\E[d(X,Y)^2]\bigr)^{1/2}: \Law(X)=\nu_1,\ \Law(Y)=\nu_2\Bigr\},
\]
where the infimum is over all couplings $(X,Y)$ of $\nu_1$ and $\nu_2$. Define
\[
\tau_{d,2}(\eps)=\inf\Bigl\{t\ge 0: \sup_{x,y\in \SO(n)} W_{d,2}\bigl(K^t(x,\cdot),K^t(y,\cdot)\bigr)\le \eps\Bigr\}.
\]
For a measurable map $T$ and a probability measure $\nu$, $T_\#\nu$ denotes the push-forward measure, defined by
\[
(T_\#\nu)(B)=\nu\bigl(T^{-1}(B)\bigr)
\]
for measurable sets $B$. If $p\in\R^d$ and $q$ is a vector in a Euclidean space $V$, then $p\otimes q:\R^d\to V$ denotes the rank-one map $u\mapsto \langle p,u\rangle q$.

Throughout the paper, we use the polynomial scales
\begin{equation}
\label{eq:scales}
\omega_n = N^{-4},
\qquad
\sigma_n = N^{-3},
\qquad
m = \lceil C_1 N\log N\rceil
\end{equation}
where $C_1>10$ is a fixed constant\footnote{We have not tried to optimize the value of $C_1$} independent of $N$.

\subsection{Geometry}

Let $e_1,\dots,e_m$ denote the standard basis vectors of $\R^m$. We write $M_n(\R)$ for the space of real $n\times n$ matrices, and for $M\in M_n(\R)$ we write $\norm{M}_{\op}$ for its operator norm with respect to the Euclidean norm on $\R^n$. If $F$ is a $C^2$ map between finite-dimensional Euclidean spaces, then $\dd F(x)$ and $\dd^2F(x)$ denote the first and second Fr\'echet derivatives at $x$; thus $\dd F(x)[u]$ is linear in $u$ and $\dd^2F(x)[u,v]$ is bilinear in $(u,v)$. We write $\norm{\dd F(x)}_{\op}$ and $\norm{\dd^2F(x)}_{\op}$ for the induced operator norms, using the Hilbert-Schmidt norm when the codomain is $\so(n)$ or $M_n(\R)$, and the Euclidean norm otherwise. For symmetric matrices
$A,B$, we write $A\preceq B$ if $B-A$ is positive semidefinite, and we write
$\lambda_{\min}(A)$ and $\lambda_{\max}(A)$ for the smallest and largest eigenvalues of $A$. 

For $1\le k<\ell\le n$, let $E_{k\ell}$ be the matrix with a $1$ in position $(k,\ell)$ and $0$ elsewhere, and define
\[
a_{k\ell} = \frac{1}{\sqrt{2}}\bigl(E_{k\ell}-E_{\ell k}\bigr).
\]
Using our fixed ordering of $[n]_{\neq}$, we write these matrices in order as
\[
a_1,\dots,a_N.
\]
They form a basis of the tangent space $\so(n)$ of $SO(n)$ at the identity. Under the Hilbert--Schmidt inner product
\[
\ip{u}{v}_{\HS} = \Tr(u^\top v),
\]
this is an orthonormal basis. Furthermore, since $\SO(n)$ is a matrix Lie group, its Lie-group exponential agrees with the usual matrix exponential, and for the coordinate-plane rotation associated with $a_i$ we have
\[
R(i,\theta) = \exp \!\bigl(\sqrt{2}\,\theta a_i\bigr).
\]

For $g\in \SO(n)$ and $u\in \so(n)$, denote by $\Ad(g)u=gug^{-1}$ the adjoint action of $\SO(n)$ on $\so(n)$. Write $D_{\HS}$ for the bi-invariant Riemannian metric on $\SO(n)$ induced by
$\ip{\cdot}{\cdot}_{\HS}$.
Since the ambient Hilbert--Schmidt norm of a differentiable curve is bounded by its speed,
\begin{equation}
\label{eq:ambient-vs-riem}
\norm{x-y}_{\HS} \le D_{\HS}(x,y)
\end{equation}
for all $x,y\in \SO(n)$. We will do many calculations on $\SO(n)$ by locally linearizing small perturbations and then doing easier calculations on the tangent space. To justify this, we note the following result which is an immediate corollary of Theorem 1.31 of \cite{high:FM}:

\begin{lemma}[Local logarithm near the identity]
\label{lem:local-log}
There exist universal constants $\rho_0,c_{\log}>0$ such that, for all $n$ and all $g\in \SO(n)$ satisfying
$D_{\HS}(\Id,g)\le \rho_0$, the principal matrix logarithm $h=\log g$ is well defined and
\[
\norm{h}_{\HS}\le c_{\log}D_{\HS}(\Id,g).
\]
\end{lemma}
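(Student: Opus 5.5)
The plan is to use the spectral description of rotations and reduce everything to a statement about eigenvalue angles, so that no dimension-dependent constants appear.

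First, for $g\in\SO(n)$ there is an orthogonal $Q$ with
\[
Q^\top g Q=\diag\bigl(B(\phi_1),\dots,B(\phi_k),1,\dots,1\bigr),
\qquad \phi_j\in(-\pi,\pi],
\]
where each $B(\phi)$ is a $2\times2$ rotation block. Whenever every $|\phi_j|<\pi$, the principal logarithm exists. It is $h=Q\,\diag\bigl(\phi_1 J,\dots,\phi_kJ,0,\dots,0\bigr)Q^\top$, with $J$ the standard $2\times 2$ skew matrix, and it lies in $\so(n)$. Its norm is $\norm{h}_{\HS}^2=2\sum_j\phi_j^2$. The one-parameter subgroup $t\mapsto\exp(th)$ is a curve from $\Id$ to $g$ of constant speed $\norm{h}_{\HS}$. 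Because the metric is bi-invariant, such curves are geodesics, and $\exp(h)$ for the principal $h$ is a minimizing one. In fact $D_{\HS}(\Id,g)=\min\{\norm{h'}_{\HS}:\exp h'=g\}$, and the principal choice minimizes each $|\phi_j|$ individually. So we expect the identity $D_{\HS}(\Id,g)=\norm{\log g}_{\HS}$, which would give the lemma with $c_{\log}=1$.

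To avoid having to prove the minimizing-geodesic characterization, a cruder route is enough. I would compare with the ambient distance. The conjugation-invariant computation
\[
\norm{g-\Id}_{\HS}^2=\sum_j 2\,\abs{e^{i\phi_j}-1}^2=\sum_j 8\sin^2(\phi_j/2)
\]
combines with \eqref{eq:ambient-vs-riem} to give $\sum_j 8\sin^2(\phi_j/2)\le D_{\HS}(\Id,g)^2\le\rho_0^2$.

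Next choose $\rho_0$ small, say $\rho_0=1$. Each term $8\sin^2(\phi_j/2)$ is then at most $1$, which forces $|\phi_j|\le 2\arcsin(1/\sqrt8)<\pi/2$. In particular no angle equals $\pi$, so the principal logarithm is well defined and equals the $h$ above.

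On $|\phi|\le\pi/2$ we have the elementary bound $|\sin(\phi/2)|\ge|\phi|/\pi$. This gives
\[
\norm{h}_{\HS}^2=2\sum_j\phi_j^2\le \frac{\pi^2}{4}\sum_j8\sin^2(\phi_j/2)=\frac{\pi^2}{4}\norm{g-\Id}_{\HS}^2\le\frac{\pi^2}{4}D_{\HS}(\Id,g)^2.
\]
Hence we may take $c_{\log}=\pi/2$ and $\rho_0=1$, independent of $n$.

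The only point needing care is that the HS norm is summed over blocks and not maximized. The smallness of $D_{\HS}$ therefore controls each angle separately, and this is what makes the constants dimension-free. I expect no real obstacle. The step most worth checking is the claim that the principal logarithm, as defined in \cite{high:FM}, coincides with the blockwise formula when all eigenvalues avoid $-1$.
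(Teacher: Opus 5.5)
Your proof is correct and more complete than what the paper provides. The paper offers no argument for this lemma beyond calling it an immediate corollary of Theorem~1.31 of \cite{high:FM}. That result is the existence-and-uniqueness statement for the principal logarithm: for a matrix with no eigenvalues on $(-\infty,0]$, the principal logarithm is the unique logarithm with spectrum in the strip $\{-\pi<\mathrm{Im}\,z<\pi\}$, and it is real when the matrix is real.

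This settles the step you flagged. Your blockwise $h$ is a real logarithm of $g$ whose eigenvalues are $\pm i\phi_j$ and $0$, all lying in the strip once $|\phi_j|<\pi$, so it is the principal logarithm by uniqueness.

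The citation does not, however, literally contain the quantitative, dimension-free bound $\norm{h}_{\HS}\le c_{\log}D_{\HS}(\Id,g)$, and your route supplies it. You pass to the ambient distance via \eqref{eq:ambient-vs-riem}, use the conjugation-invariant identity $\norm{g-\Id}_{\HS}^2=\sum_j 8\sin^2(\phi_j/2)$, and finish with Jordan's inequality. The key point is that the Hilbert--Schmidt norm is a sum over blocks, so smallness of the sum controls every angle separately.

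What each approach buys: the paper's citation is brief. Yours gives explicit constants $\rho_0=1$, $c_{\log}=\pi/2$, and the slightly stronger bound $\norm{\log g}_{\HS}\le\frac{\pi}{2}\norm{g-\Id}_{\HS}$. Because Jordan's inequality holds on all of $|\phi|\le\pi$, any $\rho_0<2\sqrt{2}$ would work with the same $c_{\log}$. Your opening claim $D_{\HS}(\Id,g)=\norm{\log g}_{\HS}$ is true but, as you note, not needed.
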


The principal matrix logarithm $h$ of $g \in SO(n)$, when it exists, satisfies $g = \exp(h)$. Lemma \ref{lem:local-log} tells us that we can take logs (and thus ``solve" the equation $g = \exp(h)$) as necessary. From this point onward, whenever we write $\log$ of a group element near the identity, we mean the principal matrix logarithm.

\section{Two-stage coupling}
\subsection{The first-stage coupling}

The first stage of our two-stage coupling will be based on the following proposition, which is essentially an immediate corollary of the main result of \cite{Oliveira}:

\begin{proposition}[Contractive coupling]
\label{prop:scaffold}
Fix $B>0$.
There exists $C_0(B)<\infty$ such that for every $x,y\in \SO(n)$ one can couple two copies
$(X_t,Y_t)$ of Kac's walk with $X_0=x$ and $Y_0=y$ so that, for all
\begin{equation}
\label{eq:t0-def}
t_0 \geq C_0(B)\,N\log n,
\end{equation}
we have
\begin{equation}
\label{eq:scaffold-bound}
\Prob\!\bigl(D_{\HS}(X_{t_0},Y_{t_0})>N^{-B}\bigr)\le N^{-2}.
\end{equation}
\end{proposition}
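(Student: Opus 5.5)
We plan to derive the proposition from Oliveira's Wasserstein contraction estimate together with Markov's inequality. The first step is to recall Oliveira's main result \cite{Oliveira}. He builds an explicit Markovian coupling of two copies of Kac's walk: both copies use the same coordinate pair $i_t$, and the angles are chosen so that the two rotations act compatibly on the relevant columns. Under this coupling the squared Hilbert--Schmidt distance contracts in expectation. Concretely, there is a universal $c>0$ with
\[
\E\bigl[\norm{X_{t+1}-Y_{t+1}}_{\HS}^2 \,\big|\, X_t,Y_t\bigr]\le \Bigl(1-\frac{c}{N}\Bigr)\norm{X_t-Y_t}_{\HS}^2 .
\]
Equivalently, $W_{\HS,2}(K^t(x,\cdot),K^t(y,\cdot))\le (1-c/N)^{t/2}\,\norm{x-y}_{\HS}$. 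If Oliveira's statement is phrased with the Riemannian distance $D_{\HS}$ rather than the ambient norm, the same bound holds up to constants.

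Next we iterate this bound. The initial distance is bounded by a diameter: $\norm{x-y}_{\HS}\le 2\sqrt n$, and $D_{\HS}(x,y)\le \pi\sqrt{n}$ up to a constant. Iterating gives
\[
\E\bigl[\norm{X_t-Y_t}_{\HS}^2\bigr]\le 4n\,e^{-ct/N}.
\]
Choosing $t_0\ge C_0(B)N\log n$ with $C_0(B)$ large in terms of $B$ and $c$ makes the right-hand side at most $N^{-2B-2}\cdot N^{-1}$, since $\log N\asymp\log n$. Because the bound only decreases in $t$, it holds for every $t_0$ at or beyond this threshold. Markov's inequality then gives
\[
\Prob\bigl(\norm{X_{t_0}-Y_{t_0}}_{\HS}>N^{-B-1}\bigr)\le N^{2B+2}\,\E\norm{X_{t_0}-Y_{t_0}}_{\HS}^2\le N^{-2}.
\]

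The final step, and the main obstacle, is converting closeness in the ambient norm to closeness in $D_{\HS}$. Inequality \eqref{eq:ambient-vs-riem} only goes the other way, so a reverse local comparison is needed. Set $g=Y_{t_0}X_{t_0}^{-1}$. Bi-invariance gives $D_{\HS}(X_{t_0},Y_{t_0})=D_{\HS}(\Id,g)$, and right invariance of the HS norm gives $\norm{g-\Id}_{\HS}=\norm{X_{t_0}-Y_{t_0}}_{\HS}$. If $\norm{g-\Id}_{\HS}$ is small, then $g$ has eigenvalues $e^{\pm i\phi_j}$ with all $|\phi_j|$ small, because $\norm{g-\Id}_{\HS}^2=\sum_j 2\,|e^{i\phi_j}-1|^2$. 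Consequently $D_{\HS}(\Id,g)=\norm{\log g}_{\HS}=\bigl(\sum 2\phi_j^2\bigr)^{1/2}\le 2\norm{g-\Id}_{\HS}$, using $|\phi|\le \tfrac{\pi}{2}|e^{i\phi}-1|$ for $|\phi|\le\pi$. In fact this inequality needs no smallness at all, since every $\phi_j$ lies in $[-\pi,\pi]$.

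Hence on the complement of the bad event, $D_{\HS}(X_{t_0},Y_{t_0})\le 2N^{-B-1}\le N^{-B}$, which proves \eqref{eq:scaffold-bound}. The substantive input is the one-step contraction rate $1-c/N$ from \cite{Oliveira}. The care required lies in matching his normalization, either Frobenius distance or a per-column metric: if his contraction is stated for a per-column metric, summing over the $n$ columns only changes polynomial prefactors, which the choice of $C_0(B)$ absorbs.
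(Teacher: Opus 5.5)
Your argument has the same core as the paper's (Oliveira's Wasserstein contraction followed by Markov's inequality). However, the input you quote is misstated, and that misstatement is what creates your ``main obstacle.'' Oliveira does not prove a one-step contraction of the ambient quantity $\norm{X_t-Y_t}_{\HS}^2$ under a same-pair Markovian coupling. His global result is $W_2$-contraction for the Riemannian distance $D_{\HS}$. It comes from a coupling for nearby points, extended by a path-coupling theorem on length spaces, which is why the geodesic metric rather than the chordal one is needed.

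The ambient version you wrote actually fails for same-pair couplings. Take $x=\Id$ and $y=\Id-2uu^\top-2ww^\top$ with $u=(e_1+e_2)/\sqrt2$, $w=(e_3+e_4)/\sqrt2$. Any coupling using the same pair $i$ and angle offset $\psi$ gives $\norm{X_1-Y_1}_{\HS}^2=2n-2\Tr(R(i,\psi)y)$. Checking the $2\times2$ principal blocks of $y$ shows $\Tr(R(i,\psi)y)\le\Tr(y)$ for every $i$ and $\psi$, so the distance never decreases.

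Your hedge rescues the argument only if read the right way. You must iterate the contraction in $D_{\HS}$, which gives $\E[D_{\HS}(X_t,Y_t)^2]\le(1-c/N)^{t}\,\pi^2 n$, and pay the metric-comparison constant once at the end. Paying it per step is fatal, since a factor $\pi^2/4$ per step grows geometrically.

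There is also an arithmetic slip in the Markov step: $N^{2B+2}\cdot N^{-2B-3}=N^{-1}$, not $N^{-2}$. You need $\E\norm{X_{t_0}-Y_{t_0}}_{\HS}^2\le N^{-2B-4}$, which only requires enlarging $C_0(B)$.

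Once you work in $D_{\HS}$, Markov's inequality applies to $D_{\HS}^2$ directly and the conversion step disappears; this is exactly the paper's route. The paper quotes Oliveira's bound $W_{D_{\HS},2}(K^t(z,\cdot),\mu)\le\eps$ for $t\ge\lceil n^2\log(\pi\sqrt n/\eps)\rceil$ and takes $\eps=\tfrac12N^{-B-1}$. The triangle inequality through Haar measure then gives $W_{D_{\HS},2}(K^{t_0}(x,\cdot),K^{t_0}(y,\cdot))\le N^{-B-1}$. It takes an optimal coupling and applies Markov.

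Your eigenvalue argument giving $D_{\HS}(x,y)\le\frac{\pi}{2}\norm{x-y}_{\HS}$ globally is correct and a nice converse to \eqref{eq:ambient-vs-riem}, but it is unnecessary here.

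Your Markovian formulation does buy something, provided it is built from optimal one-step $W_{D_{\HS},2}$ couplings with a measurable selection. It yields a single coupling of whole paths for which the bound holds simultaneously for all $t_0\ge C_0(B)N\log n$. That matches the quantifier order in the statement more literally than the paper's coupling, which is constructed for one fixed $t_0$. The main theorem only uses one $t_0$, so nothing is lost in the paper's version.
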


\begin{proof}
Theorem 1 of \cite{Oliveira} states that, for all $\eps > 0$, all $t\ge \bigl\lceil n^2\log(\pi\sqrt{n}/\eps)\bigr\rceil$ and all $z \in \SO(n)$,
\[
W_{D_{\HS},2}\bigl(K^t(z,\cdot),\mu\bigr)\le \eps.
\]

Applying this with $\eps=\tfrac12 N^{-B-1}$ shows that there exists $C_0(B)$ such that for $t_0$ as in
\eqref{eq:t0-def} and all $z \in SO(n)$
\[
W_{D_{\HS},2}\bigl(K^{t_0}(z,\cdot),\mu\bigr)\le \frac12 N^{-B-1}.
\]
Therefore, for all $x,y\in \SO(n)$,
\begin{align*}
W_{D_{\HS},2}\bigl(K^{t_0}(x,\cdot),K^{t_0}(y,\cdot)\bigr) 
\le
W_{D_{\HS},2}\bigl( &K^{t_0}(x,\cdot),\mu\bigr) \\
&+
W_{D_{\HS},2}\bigl(\mu,K^{t_0}(y,\cdot)\bigr) 
\le N^{-B-1}.
\end{align*}
By the definition of Wasserstein distance, there exists a coupling of $(X_{t_0},Y_{t_0})$ with
\[
\E\bigl[D_{\HS}(X_{t_0},Y_{t_0})^2\bigr]\le N^{-2B-2}.
\]
Markov's inequality then gives
\[
\Prob\!\bigl(D_{\HS}(X_{t_0},Y_{t_0})>N^{-B}\bigr)
\le
N^{2B}\E\bigl[D_{\HS}(X_{t_0},Y_{t_0})^2\bigr]
\le N^{-2},
\]
which is \eqref{eq:scaffold-bound} and we are done.
\end{proof}

\subsection{Second stage coupling}
\label{subsec:global-notation}
We now define our second stage of the coupling that enables the Wasserstein closeness from this first
stage to total variation.
The key idea is to represent the endpoint of the chain as a \emph{perturbation of a fixed block of updates}, and to study this perturbation in logarithmic coordinates.

\begin{definition} (Perturbation of Kac's walk) \label{defn:randupdate}
Fix $m = \lceil C_1 N\log N\rceil$ as in \eqref{eq:scales}, and let
\begin{equation}
\label{eq:A-def}
A = \bigl((i_1,\theta_1),\dots,(i_m,\theta_m)\bigr)
\end{equation}
be a sequence of i.i.d.\ updates, where $i_t$ is uniform on $\{1,\dots,N\}$ and $\theta_t$ is uniform on $[0,2\pi)$. For $\delta\in\R^m$, define the perturbed product
\begin{equation}
\label{eq:L-def}
L_A(\delta)
=
R(i_m,\theta_m+\delta_m)\cdots R(i_1,\theta_1+\delta_1).
\end{equation}
\end{definition}

\begin{lemma}
\label{lem:marginal}
Let $\Delta=(\Delta_1,\dots,\Delta_m)$ be any random vector independent of the base angles
$(\theta_t)_{t=1}^m$.
Then for every $z\in \SO(n)$,
\[
L_A(\Delta)z \sim K^m(z,\cdot).
\]
\end{lemma}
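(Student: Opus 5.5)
The plan is to condition on $\Delta$ and on the plane indices $(i_t)_{t=1}^m$, and to show that conditionally the shifted angles $\theta_t+\Delta_t \pmod{2\pi}$ are i.i.d.\ uniform on $[0,2\pi)$ and independent of the indices. The key observation is translation invariance of the uniform law on the circle: if $\theta$ is uniform on $[0,2\pi)$ and $c\in\R$ is fixed, then $\theta+c \bmod 2\pi$ is again uniform. The only subtlety is that the statement asks for independence of $\Delta$ from the base angles, while $\Delta$ could in principle depend on the indices $(i_t)$. I will therefore interpret the hypothesis as saying that $\Delta$ is independent of $(\theta_t)_{t=1}^m$ jointly with, or at least conditionally given, $(i_t)$. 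In the paper's use, $\Delta$ is an independent Gaussian and independent of all of $A$, so this is harmless. I would note this explicitly, or condition on $(i_t,\Delta)$ with $(\theta_t)$ independent of that pair.

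The steps are as follows. First, fix a realization $(i_t)=(j_t)$ and $\Delta=d\in\R^m$. Under the independence assumption, the conditional law of $(\theta_1,\dots,\theta_m)$ is still uniform on $[0,2\pi)^m$, that is, a product of uniforms. Second, the map $(\theta_t)\mapsto(\theta_t+d_t \bmod 2\pi)$ is a coordinatewise rotation of the torus $(\R/2\pi\Z)^m$, and it preserves the product Haar (Lebesgue) measure. Hence the $\tilde\theta_t:=\theta_t+d_t \bmod 2\pi$ are conditionally i.i.d.\ uniform. Third, since $\theta\mapsto R(i,\theta)=\exp(\sqrt2\,\theta a_i)$ is $2\pi$-periodic, we have $R(j_t,\theta_t+d_t)=R(j_t,\tilde\theta_t)$. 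This gives
\[
L_A(d)z \stackrel{d}{=} R(j_m,\tilde\theta_m)\cdots R(j_1,\tilde\theta_1)z
\]
conditionally. Fourth, integrate over $(i_t)$, which are i.i.d.\ uniform and independent of the $\tilde\theta$'s since the conditional law of the $\tilde\theta$'s does not depend on $(j_t)$. Then integrate over $d$. The result is that $L_A(\Delta)z$ has the law of $m$ steps of \eqref{eq:kac-update} started at $z$, namely $K^m(z,\cdot)$.

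There is no real obstacle here. The only care needed is in the measure-theoretic conditioning and in stating which independence is actually used (independence from the whole block $A$ suffices). The periodicity check, $\exp(2\pi\sqrt2 a_i)=\Id$, holds because $\sqrt2 a_i$ generates the standard rotation in plane $i$.
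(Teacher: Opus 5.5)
Your proof is correct and follows the same route as the paper's. Both condition on the plane indices and on $\Delta$, use translation invariance of the uniform law on $[0,2\pi)$ together with $2\pi$-periodicity of $R(i,\cdot)$ to see that the shifted angles are again i.i.d.\ uniform, and then read off $m$ Kac steps. Your caveat is also well taken: the argument genuinely needs $(\theta_t)$ to be independent of the pair $((i_t),\Delta)$, not just of $\Delta$; the paper's proof uses this tacitly, and it holds in the application because $\Delta$ is independent of all of $A$.
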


\begin{proof}
Condition on $(i_t)_{t=1}^m$, on $\Delta$, and on $z$.
Since addition mod $2\pi$ preserves the uniform law on $[0,2\pi)$,
each $\theta_t+\Delta_t$ is uniform on $[0,2\pi)$.
Conditional on $\Delta$, the variables $\theta_t+\Delta_t$ remain independent because the
base angles $\theta_t$ are independent and are independent of $\Delta$.
Therefore
\[
\bigl(i_t,\theta_t+\Delta_t\bigr)_{t=1}^m
\]
has exactly the same law as the usual update sequence for $m$ steps of Kac's walk,
so $L_A(\Delta)z$ has law $K^m(z,\cdot)$.
\end{proof}

By Lemma~\ref{lem:marginal},  we may endow the vector $\Delta$ with \emph{any} distribution as long as it is independent of $A$, while still preserving the distribution of the $m$-step update to Kac's walk. Inspired by the role of Brownian motion in Malliavin calculus, we choose this distribution to be Gaussian:
\begin{definition} (Gaussian perturbation of the driving noise.) \label{defn:randupdate2}
For the random perturbation, let \footnote{As is conventional, we write $\Delta$ for 
a random variable and use $\delta$ to denote its realization.}
\begin{equation}\label{eq:Deltadef}
\Delta\sim \mathcal N(0,\sigma_n^2 I_m)
\end{equation}
independent of $A$ in \eqref{eq:A-def} and $\sigma_n = N^{-3}$ as in \eqref{eq:scales}.
\end{definition}
Definitions \ref{defn:randupdate} and \ref{defn:randupdate2} complete our second stage coupling.

\section{Proof sketch of the second stage coupling}
\label{SecProofSketch}

We now summarize the mechanism that upgrades the Wasserstein contraction from
Proposition~\ref{prop:scaffold} to total-variation overlap. Starting from arbitrary $x,y\in\SO(n)$, Proposition~\ref{prop:scaffold} gives a
coupling such that after
\[
t_0=O(N\log n)
\]
steps,
\[
D_{\HS}(X_{t_0},Y_{t_0})\le \omega_n
\]
with high probability. On this event, let
\[
g=X_{t_0}^{-1}Y_{t_0}.
\]
By bi-invariance of $D_{\HS}$ and Lemma~\ref{lem:local-log}, for all large $n$ we
may write
\[
g=\exp(h),
\qquad
\|h\|_{\HS}\le c_{\log}\omega_n.
\]
Thus the second stage reduces to showing that a fresh block of updates can absorb
right multiplication by such a small element.

For the second stage, we expose an independent block
\[
A=((i_1,\theta_1),\dots,(i_m,\theta_m)),
\qquad
m=\lceil C_1N\log N\rceil,
\]
and as described in Definition \ref{defn:randupdate2}, perturb all angles in the block by an independent Gaussian vector
\[
\Delta\sim\mathcal N(0,\sigma_n^2I_m).
\]
Write
\[
H_A(\delta)=L_A(\delta)L_A(0)^{-1},
\qquad
\nu_A=\Law(H_A(\Delta)\mid A).
\]
If
\[
z=L_A(0)X_{t_0},
\]
then Lemma~\ref{lem:marginal} shows that the two second-stage conditional laws are
\[
\Law(L_A(\Delta)X_{t_0}\mid A)=(T_z)_\#\nu_A,
\qquad
\Law(L_A(\Delta)Y_{t_0}\mid A)=(T_{zg})_\#\nu_A.
\]
Since total variation is invariant under right translation and the adjoint action
preserves the Hilbert--Schmidt norm, it is enough to prove that
\[
\|\nu_A-(T_{\exp(h)})_\#\nu_A\|_{\TV}=o(1)
\]
uniformly for $\|h\|_{\HS}\le c_{\log}\omega_n$.

To study \(\nu_A\), we pass to logarithmic coordinates. On the high-probability
event
\[
E=\{\|\Delta\|_2\le 2\sigma_n\sqrt m\},
\]
the logarithm
\[
U_A=u_A(\Delta)=\log H_A(\Delta)
\]
is well-defined. Proposition~\ref{prop:quadratic} gives the local expansion
\[
u_A(\delta)=J_A\delta+r_A(\delta),
\qquad
J_A=\dd u_A(0),
\]
with a quadratic remainder. Thus \(U_A\) is well approximated by the linear term
\(J_A\Delta\). The covariance of this linear approximation is
\[
M_A=J_AJ_A^\top.
\]
Proposition~\ref{prop:covariance} shows that with high probability over the block,
\[
\frac{m}{N}I_N\preceq M_A\preceq 3\frac{m}{N}I_N.
\]
Hence the Gaussian measure
\[
\gamma_A=\mathcal N(0,\Sigma_A),
\qquad
\Sigma_A=\sigma_n^2M_A,
\]
is non-degenerate in every tangent direction. The matrix $M_A$ thus plays the role of the Malliavin covariance matrix. Proposition~\ref{prop:gaussian-approx}
then shows that
\[
\|\Law(U_A\mid A,E)-\gamma_A\|_{\TV}=o(1).
\]

The final step is to transfer this Gaussian approximation on $\so(n)$ back to the group.
Proposition~\ref{prop:gaussian-bch} shows that if a law on \(\SO(n)\) is the
exponential pushforward of a measure on \(\so(n)\) that is close to a centered
Gaussian, then right multiplication by \(\exp(h)\) changes the group law by the
Gaussian shift cost
\[
\|\Sigma_A^{-1/2}h\|_{\HS}
\]
plus a Baker--Campbell--Hausdorff error of order
\[
\sqrt N\,\|h\|_{\HS},
\]
up to exponentially small tails. Since \(\Sigma_A\simeq \sigma_n^2(m/N)I_N\) and \(\|h\|_{\HS}\le c_{\log}\omega_n\), both terms are \(o(1)\).
Combining the above, Proposition~\ref{prop:block-translate} then yields
\[
\|\nu_A-(T_{\exp(h)})_\#\nu_A\|_{\TV}=o(1).
\]

The rest of the paper proves these ingredients in order:
Section~\ref{sec:perturbation} proves the quadratic expansion
(Proposition~\ref{prop:quadratic}),
Section~\ref{sec:covariance} proves the covariance estimate
(Proposition~\ref{prop:covariance}),
Section~\ref{sec:transport} proves the Gaussian approximation
(Proposition~\ref{prop:gaussian-approx}),
Section~\ref{sec:group-transport} proves the group-transport estimates
(Propositions~\ref{prop:gaussian-bch} and \ref{prop:block-translate}),
and Section~\ref{sec:proof-main} assembles the two-stage coupling to prove
Theorem~\ref{thm:main}.

\section{The perturbation block and its linearization}
\label{sec:perturbation}
For $\delta = (\delta_1, \delta_2, \cdots, \delta_m) \in\R^m$ and $L_A(\delta)$ defined in \eqref{eq:L-def},
define the associated \emph{relative perturbation}
\begin{equation}
\label{eq:H-def}
H_A(\delta)=L_A(\delta)(L_A(0))^{-1}.
\end{equation}
Thus $H_A(\delta)$ isolates the effect of a small perturbation of the driving randomness.
Write
\begin{equation}
\label{eq:Gt-Pt}
P_{t+1}=R(i_m,\theta_m)R(i_{m-1},\theta_{m-1})\cdots R(i_{t+1},\theta_{t+1}),
\qquad
P_{m+1}=\Id,
\end{equation}
and define the transported basis vectors
\begin{equation}
\label{eq:vt-def}
v_t=\Ad(P_{t+1})a_{i_t}\in \so(n).
\end{equation}

\begin{lemma}
We have
\begin{equation}
\label{eq:H-product}
H_A(\delta)=\prod_{t=m}^1 \exp\!\bigl(\sqrt{2}\,\delta_t v_t\bigr).
\end{equation}
\end{lemma}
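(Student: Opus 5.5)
We need to show $H_A(\delta)=\exp(\sqrt2\delta_m v_m)\cdots\exp(\sqrt2\delta_1 v_1)$, with the product ordered so that $t=m$ is leftmost, matching the notation $\prod_{t=m}^1$. The proof is a telescoping computation resting on two elementary facts. The first is that rotations in the same coordinate plane commute, so
\[
R(i,\theta+\delta)=\exp\bigl(\sqrt2(\theta+\delta)a_i\bigr)=\exp(\sqrt2\,\delta a_i)\,R(i,\theta).
\]
The second is the conjugation identity $g\exp(u)g^{-1}=\exp(\Ad(g)u)$, valid for any $g\in\SO(n)$ and $u\in\so(n)$.

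I would argue by induction on the number of factors. Define the partial products $L^{(t)}(\delta)=R(i_t,\theta_t+\delta_t)\cdots R(i_1,\theta_1+\delta_1)$ and $Q_t=L^{(t)}(0)$, so that $L_A(\delta)=L^{(m)}(\delta)$. The inductive claim is
\[
L^{(t)}(\delta)=\Bigl[\prod_{s=t}^{1}\exp\bigl(\sqrt2\,\delta_s\Ad(Q_t Q_s^{-1})a_{i_s}\bigr)\Bigr]Q_t .
\]
For the step from $t-1$ to $t$, we write $R(i_t,\theta_t+\delta_t)=\exp(\sqrt2\delta_t a_{i_t})R(i_t,\theta_t)$. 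We then move $R(i_t,\theta_t)$ rightward through the existing exponentials by conjugation. This turns $\Ad(Q_{t-1}Q_s^{-1})$ into $\Ad(R(i_t,\theta_t)Q_{t-1}Q_s^{-1})=\Ad(Q_tQ_s^{-1})$, and the trailing factor becomes $R(i_t,\theta_t)Q_{t-1}=Q_t$. The new leftmost factor is $\exp(\sqrt2\delta_t a_{i_t})$, which agrees with the formula since $Q_tQ_t^{-1}=\Id$.

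To finish, take $t=m$ and multiply on the right by $Q_m^{-1}=L_A(0)^{-1}$. It remains to check that
\[
Q_mQ_s^{-1}=R(i_m,\theta_m)\cdots R(i_{s+1},\theta_{s+1})=P_{s+1},
\]
so that $\Ad(Q_mQ_s^{-1})a_{i_s}=v_s$, as required by \eqref{eq:vt-def}. This includes the boundary case $P_{m+1}=\Id$.

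There is no real obstacle here. The only care needed is in bookkeeping the order of the noncommuting product and confirming that the conjugating element is $P_{s+1}$ rather than $P_s$. That holds because the factor $R(i_s,\theta_s)$ sits to the right of $\exp(\sqrt2\delta_s a_{i_s})$, so it is absorbed into $Q_s$ and not into the conjugation.
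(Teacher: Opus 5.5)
Your proof is correct and follows the paper's argument. The paper also factors $R(i_t,\theta_t+\delta_t)=\exp(\sqrt2\,\delta_t a_{i_t})R(i_t,\theta_t)$, regroups the product directly as $\prod_{t=m}^{1}P_{t+1}\exp(\sqrt2\,\delta_t a_{i_t})P_{t+1}^{-1}$, and then applies $g\exp(u)g^{-1}=\exp(\Ad(g)u)$; your induction on the partial products $Q_t$ is a more explicit bookkeeping of that same regrouping.
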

\begin{proof}
Write \(B_t=R(i_t,\theta_t)\) and
\(E_t=\exp(\sqrt2\,\delta_t a_{i_t})\). We have
\(R(i_t,\theta_t+\delta_t)=E_tB_t\) and thus
\[
H_A(\delta)
=
E_mB_mE_{m-1}B_{m-1}\cdots E_1B_1
B_1^{-1}\cdots B_m^{-1}.
\]
Regrouping the factors gives
\[
H_A(\delta)
=
\prod_{t=m}^{1}
P_{t+1}E_tP_{t+1}^{-1},
\qquad
P_{t+1}=B_m\cdots B_{t+1}.
\]
Since conjugation commutes with the exponential map,
\[
P_{t+1}E_tP_{t+1}^{-1}
=
\exp\!\bigl(\sqrt2\,\delta_t\,\Ad(P_{t+1})a_{i_t}\bigr)
=
\exp\!\bigl(\sqrt2\,\delta_t v_t\bigr),
\]
which proves \eqref{eq:H-product}.
\end{proof}

Define the (principal) logarithm $u_A(\delta)$ of $H_A(\delta)$ and its derivative:
\begin{equation} \label{eq:u-def}
u_A(\delta)=\log H_A(\delta),
\qquad
J_A=\dd u_A(0):\R^m\to \so(n).
\end{equation}

\begin{lemma}
\label{lem:jacobian}
The derivative $J_A$ satisfies
\begin{equation}
\label{eq:J-columns}
J_A e_t = \sqrt{2}\,v_t=\sqrt{2}\,\Ad(P_{t+1})a_{i_t},
\end{equation}
and therefore
\begin{equation}
\label{eq:J-linear}
J_A\delta = \sqrt{2}\sum_{t=1}^m \delta_t v_t.
\end{equation}
\end{lemma}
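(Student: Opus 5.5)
We compute the derivative of $u_A$ at the origin one coordinate at a time, using the product representation \eqref{eq:H-product}. First, $H_A(0)=\Id$, and $\log$ is well defined and smooth on a neighborhood of $\Id$ by Lemma~\ref{lem:local-log}. Moreover, $\dd\log(\Id)$ is the identity map on the tangent space, since $\log(\exp(sX))=sX$ for small $s$. The chain rule therefore gives $J_A=\dd u_A(0)=\dd H_A(0)$, viewing $\dd H_A(0)$ as a map $\R^m\to\so(n)\subset M_n(\R)$. It suffices to compute the partial derivatives $\partial_{\delta_t}H_A(0)$.

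Fix $t$ and set $\delta=s e_t$. In \eqref{eq:H-product}, every factor with index $t'\neq t$ becomes $\exp(0)=\Id$. Hence
\[
H_A(se_t)=\exp\bigl(\sqrt2\,s\,v_t\bigr),
\]
and differentiating at $s=0$ gives $J_Ae_t=\sqrt2\,v_t=\sqrt2\,\Ad(P_{t+1})a_{i_t}$. This is \eqref{eq:J-columns}. Alternatively, $u_A(se_t)=\sqrt2\,s\,v_t$ exactly for small $|s|$, because $\|\sqrt2 s v_t\|_{\HS}=\sqrt2|s|$ (using that $\Ad$ preserves the Hilbert--Schmidt norm and $\|a_{i_t}\|_{\HS}=1$). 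For small $|s|$ this lies in the domain where the principal logarithm inverts $\exp$.

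Since $u_A$ is differentiable at $0$, linearity of $J_A$ then gives $J_A\delta=\sum_t\delta_tJ_Ae_t=\sqrt2\sum_t\delta_tv_t$, which is \eqref{eq:J-linear}.

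The only point that needs care is differentiability, so that the partial derivatives assemble into the Fréchet derivative. This holds because $H_A$ is a finite product of smooth matrix-valued functions of $\delta$, so it is smooth, and $\log$ is smooth near $\Id$. This step is routine, and I expect no genuine obstacle.
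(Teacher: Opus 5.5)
Your proof is correct and follows essentially the same argument as the paper: restrict to the line $\delta=se_t$, note that $H_A(se_t)=\exp(\sqrt2\,s\,v_t)$, hence $u_A(se_t)=\sqrt2\,s\,v_t$ for small $|s|$, and assemble the columns by linearity. You also observe that $H_A$ and the principal logarithm are smooth near $\Id$, so the partial derivatives assemble into the Fr\'echet derivative; the paper leaves this point implicit.
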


\begin{proof}
Fix $t$ and set $\delta=se_t$.
All factors in \eqref{eq:H-product} are equal to the identity except the $t^\mathrm{th}$ factor, so
\[
H_A(se_t)=\exp\!\bigl(\sqrt{2}\,s v_t\bigr).
\]
For $s$ near $0$, the principal logarithm of the right-hand side is $\sqrt{2}\,s v_t$.
Differentiating at $s=0$ gives \eqref{eq:J-columns}, and summing over coordinates gives
\eqref{eq:J-linear}.
\end{proof}

Next, we use Lemma \ref{lem:jacobian} to obtain good control on $u_A(\delta)$ for sufficiently small perturbations: 

\begin{proposition}\label{prop:quadratic}
There exist absolute constants $c_0,C_0>0$ such that if
\begin{equation}
\label{eq:radius}
\norm{\delta}_2\le \frac{c_0}{\sqrt{m}},
\end{equation}
the principal logarithm $u_A(\delta)=\log H_A(\delta)$ is well defined. Furthermore, 
whenever \eqref{eq:radius} holds, $u_A(\delta)$ admits the decomposition:
\begin{equation}
\label{eq:quadratic-expansion}
u_A(\delta):=J_A\delta+r_A(\delta)
\end{equation}
with $\norm{r_A(\delta)}_{\HS}\le C_0 m\norm{\delta}_2^2$ and 
\begin{equation}
\label{eq:quadratic-derivative}
\norm{\dd u_A(\delta)-J_A}_{\op}\le C_0 m\norm{\delta}_2.
\end{equation}
\end{proposition}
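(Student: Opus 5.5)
The plan is to work with the product formula \eqref{eq:H-product}, $H_A(\delta)=\prod_{t=m}^{1}\exp(\sqrt2\,\delta_t v_t)$, where every $v_t$ has unit Hilbert--Schmidt norm because $\Ad(P_{t+1})$ is an isometry and $\norm{a_{i_t}}_{\HS}=1$.

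\emph{Step 1: the logarithm is well defined.} By bi-invariance of $D_{\HS}$ and the triangle inequality,
\[
D_{\HS}(\Id,H_A(\delta))\le \sum_t \sqrt2\,\abs{\delta_t}\le \sqrt{2m}\,\norm{\delta}_2\le \sqrt2\,c_0 .
\]
Choosing $c_0$ small against $\rho_0$, Lemma~\ref{lem:local-log} gives that $u_A(\delta)$ is defined and $\norm{u_A(\delta)}_{\HS}\le C\sqrt m\norm{\delta}_2\le C c_0$. The same bound holds uniformly along the segment $s\delta$, $s\in[0,1]$. Consequently $H_A(s\delta)$ stays in a fixed neighbourhood $\mathcal U$ of $\Id$, of HS-radius $O(c_0)$, on which $\log$ is smooth. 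Its derivative there is $\dd\log(g)=\dd\exp(\log g)^{-1}$, given by the power series $\frac{\mathrm{ad}_u}{1-e^{-\mathrm{ad}_u}}$ composed with right translation. Since $\norm{\mathrm{ad}_u}\le 2\norm{u}_{\op}\le 2\norm{u}_{\HS}$, this series converges with dimension-free bounds once $c_0$ is small. So $\log$ has first and second derivatives on $\mathcal U$ bounded by absolute constants, measured in HS norm on the ambient matrix space.

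\emph{Step 2: derivative bound \eqref{eq:quadratic-derivative}.} The first-order partial derivatives of $H_A$ are
\[
\partial_t H_A(\delta)=Q_{>t}\,(\sqrt2 v_t)\,Q_{\le t},
\]
where $Q_{>t}=\prod_{s>t}\exp(\sqrt2\delta_s v_s)$ and $Q_{\le t}=\prod_{s\le t}\exp(\sqrt2\delta_s v_s)$. Then
\[
\partial_t H_A(\delta)\,H_A(\delta)^{-1}=\sqrt2\,\Ad(Q_{>t})v_t .
\]
Since $\Ad(Q_{>t})v_t-v_t=\int_0^1 \frac{d}{ds}\Ad(\cdots)v_t\,ds$ and each factor contributes a commutator, we get
\[
\norm{\Ad(Q_{>t})v_t-v_t}_{\HS}\le 2\sqrt2\sum_{s>t}\abs{\delta_s}\,\norm{[v_s,v_t]}_{\HS}\le C\sqrt m\,\norm{\delta}_2 ,
\]
using $\norm{[x,y]}_{\HS}\le 2\norm{x}_{\op}\norm{y}_{\HS}\le 2$. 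Now write
\[
\dd u_A(\delta)[e_t]=\dd\log(H)\bigl[\sqrt2\Ad(Q_{>t})v_t\,H\bigr].
\]
Subtracting $J_Ae_t=\sqrt2 v_t$, each column differs by $O(\sqrt m\norm{\delta}_2)$. This comes from two sources: the $\Ad$ error, and the deviation of $\dd\log$ at $H$ from $\dd\log$ at $\Id$, which is $O(\norm{u}_{\HS})=O(\sqrt m\norm\delta_2)$. The operator norm over all $m$ columns is at most the Frobenius norm, so it is at most $\sqrt m\cdot C\sqrt m\norm{\delta}_2=C_0 m\norm{\delta}_2$.

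\emph{Step 3: the remainder.} Integrate along the segment:
\[
r_A(\delta)=\int_0^1(\dd u_A(s\delta)-J_A)[\delta]\,ds .
\]
Applying Step 2 at $s\delta$ gives $\norm{r_A(\delta)}_{\HS}\le \int_0^1 C_0 m s\norm{\delta}_2^2\,ds\le C_0 m\norm\delta_2^2$.

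The main obstacle is keeping the constants dimension-free. The crude column-by-column estimate is exactly what produces the factor $m$, and it only closes if the estimates for $\dd\log$ and $\Ad$ are controlled in HS norm without any dependence on $n$. I would handle this by using operator-norm bounds on $\mathrm{ad}_u$, controlled by $\norm{u}_{\HS}$, rather than anything tied to the dimension $N$. A secondary point is that the $\sum_s\abs{\delta_s}$ terms must be converted via Cauchy--Schwarz, which is precisely where $\sqrt m$ enters, matching the radius \eqref{eq:radius}.
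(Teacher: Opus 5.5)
Your argument is correct, but it is organized differently from the paper's proof.

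\textbf{The paper's route.} The paper works with ambient matrix calculus and second derivatives. It shows every first and second partial of $H_A$ is $O(1)$, so $\norm{\dd H_A}_{\op}\lesssim\sqrt m$ and $\norm{\dd^2 H_A}_{\op}\lesssim m$. The chain rule through $\log$ on $\{\norm{M-\Id}_{\op}\le 1/2\}$ then gives a uniform Hessian bound $\norm{\dd^2 u_A}_{\op}\le C_0 m$ on the ball. The remainder bound follows from Taylor's formula, and \eqref{eq:quadratic-derivative} from the mean value theorem. Well-definedness comes from $\norm{H_A(\delta)-\Id}_{\op}\le 1/4$, not from $D_{\HS}$ and Lemma~\ref{lem:local-log}.

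\textbf{Your route.} You instead bound the first derivative directly, using the group structure:
\begin{itemize}
\item the right-trivialized partials $\partial_t H_A\,H_A^{-1}=\sqrt2\,\Ad(Q_{>t})v_t$;
\item the power series in $\mathrm{ad}_u$ for $\dd\log$;
\item the a priori bound $\norm{u_A(\delta)}_{\HS}\lesssim\sqrt m\,\norm{\delta}_2$ from Lemma~\ref{lem:local-log}.
\end{itemize}
Together these make each column of $\dd u_A(\delta)-J_A$ of size $O(\sqrt m\,\norm{\delta}_2)$. The Frobenius norm converts this to $O(m\norm{\delta}_2)$, and the remainder comes from integrating along the segment.

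\textbf{What each buys.} Your route never needs the Hessian of $H_A$ or a second-order chain rule. It makes the dimension-free constants explicit, via the isometry of $\Ad$ and $\norm{\mathrm{ad}_u}\le 2\norm{u}_{\op}$; the paper only asserts this when it says the derivatives of $\log$ are bounded by absolute constants. It also shows column $t$ of $\dd u_A(\delta)$ as a slightly distorted transported direction, which fits the Malliavin picture. The paper's route is more mechanical and gives a uniform Hessian bound as a by-product.

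\textbf{Two harmless slips.}
\begin{itemize}
\item In your telescoping bound for $\Ad(Q_{>t})v_t-v_t$, the commutator at step $s$ should be with the partially transported vector $\Ad\bigl(e^{\sqrt2\delta_{s-1}v_{s-1}}\cdots e^{\sqrt2\delta_{t+1}v_{t+1}}\bigr)v_t$, not with $v_t$ itself. That vector still has unit HS norm, so your estimate stands.
\item With right translation the relevant series is $\mathrm{ad}_u/(e^{\mathrm{ad}_u}-1)$, not $\mathrm{ad}_u/(1-e^{-\mathrm{ad}_u})$. The two differ by the HS isometry $e^{\mathrm{ad}_u}=\Ad(e^u)$, and both are $I+O(\norm{u}_{\op})$, so nothing changes.
\end{itemize}
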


\begin{proof}
For $1\le t\le m$, set
\[
E_t(s)=\exp\!\bigl(\sqrt{2}\,s v_t\bigr),
\]
so that
\[
H_A(\delta)=E_m(\delta_m)\cdots E_1(\delta_1).
\]
We first verify that the principal logarithm of $H_A(\delta)$ is well-defined. Since $\norm{v_t}_{\HS}=1$ and $\norm{v_t}_{\op}\le 1$,
\[
\sum_{t=1}^m \norm{\sqrt{2}\,\delta_t v_t}_{\op}
\le
\sqrt{2}\sum_{t=1}^m \abs{\delta_t}
\le
\sqrt{2m}\,\norm{\delta}_2.
\]
If \eqref{eq:radius} holds and $c_0$ is small enough, then
$\sum_t \norm{\sqrt{2}\,\delta_t v_t}_{\op}\le 1/8$.
Therefore
\begin{equation}
\label{eq:H-close-to-I}
\norm{H_A(\delta)-\Id}_{\op}
\le
\exp\!\Bigl(\sum_{t=1}^m \norm{\sqrt{2}\,\delta_t v_t}_{\op}\Bigr)-1
\le \frac14.
\end{equation}
Hence the principal logarithm is well defined; see \cite[Chapter~1]{high:FM}.

We next bound the first and second derivatives of $H_A$.
Differentiating \eqref{eq:H-product} gives
\[
\partial_j H_A(\delta)
=
E_m(\delta_m)\cdots E_{j+1}(\delta_{j+1})
\,E_j'(\delta_j)\,
E_{j-1}(\delta_{j-1})\cdots E_1(\delta_1).
\]
Applying \eqref{eq:radius} and the fact that $\norm{E_j'(s)}_{\op}\le \sqrt{2}\,e^{\sqrt{2}\abs{s}}$, there exists a universal constant $C$ such that for all $j$
\[
\norm{\partial_j H_A(\delta)}_{\op}\le C.
\]
Consequently, for all $\eta\in\R^m$,
\[
\dd H_A(\delta)[\eta]
=
\sum_{j=1}^m \eta_j\,\partial_j H_A(\delta),
\]
so
\[
\norm{\dd H_A(\delta)[\eta]}_{\op}
\le
C\sum_{j=1}^m \abs{\eta_j}
\le C\sqrt{m}\,\norm{\eta}_2.
\]
Therefore
\begin{equation}
\label{eq:dH-bound}
\norm{\dd H_A(\delta)}_{\op}
\le C\sqrt{m}.
\end{equation}

Differentiating \eqref{eq:H-product} once more, if $k\neq j$ then $\partial_k\partial_j H_A(\delta)$ is a product of the factors $E_\ell(\delta_\ell)$ in which the $j^\mathrm{th}$ factor is replaced by $E_j'(\delta_j)$ and the $k^\mathrm{th}$ factor is replaced by $E_k'(\delta_k)$, while if $k=j$ then the $j$th factor is replaced by $E_j''(\delta_j)$. Since $\norm{E_\ell'(s)}_{\op}\le C$ and $\norm{E_\ell''(s)}_{\op}\le C'$ on the ball defined by \eqref{eq:radius}, there exists a universal constant $C''$ such that, for all $j,k$,
\[
\norm{\partial_k\partial_j H_A(\delta)}_{\op}\le C''.
\]
Therefore, for all $\eta,\zeta\in\R^m$,
\[
\norm{\dd^2 H_A(\delta)[\eta,\zeta]}_{\op}
\le
C''\sum_{j,k=1}^m \abs{\eta_j}\abs{\zeta_k}
\le
C''m\,\norm{\eta}_2\norm{\zeta}_2.
\]
Hence
\begin{equation}
\label{eq:d2H-bound}
\norm{\dd^2 H_A(\delta)}_{\op}
\le
C''m.
\end{equation}
Let
\[
\mathcal U=\{M\in M_n(\R): \norm{M-\Id}_{\op}\le 1/2\}.
\]
The matrix logarithm is smooth on $\mathcal U$, so its first and second derivatives are bounded in $\mathcal U$
by an absolute constant.
Combining this with \eqref{eq:H-close-to-I}, \eqref{eq:dH-bound}, and \eqref{eq:d2H-bound},
the chain rule yields
\begin{equation}
\label{eq:d2u-bound}
\sup_{\norm{\delta}_2\le c_0/\sqrt{m}}\norm{\dd^2 u_A(\delta)}_{\op}\le C_0 m
\end{equation}
for some absolute constant $C_0$.

Recall that $u_A(0)=0$, and that by Lemma~\ref{lem:jacobian}  $\dd u_A(0) = J_A$.
Taylor's theorem gives
\[
u_A(\delta)=J_A\delta+\int_0^1 (1-s)\,\dd^2u_A(s\delta)[\delta,\delta]\,\dd s,
\]
which together with \eqref{eq:d2u-bound} proves \eqref{eq:quadratic-expansion}.
Bound \eqref{eq:d2u-bound} and the mean value theorem give
\[
\norm{\dd u_A(\delta)-J_A}_{\op}
\le
\sup_{\norm{\xi}_2\le c_0/\sqrt m}\norm{\dd^2u_A(\xi)}_{\op}\,\norm{\delta}_2
\le C_0 m\norm{\delta}_2,
\]
which is \eqref{eq:quadratic-derivative} and the proof is finished.
\end{proof}

\section{Covariance and the random-matrix input}
\label{sec:covariance}

In Proposition \ref{prop:quadratic} of Section~\ref{sec:perturbation}, we showed that the logarithmic map
\[
u_A(\delta)=\log H_A(\delta)
\]
admits the linear approximation 
\[
u_A(\delta) \approx J_A\delta, \quad J_A=\dd u_A(0)
\] for small deterministic $\delta$.  Define the matrix:
\begin{equation}\label{eqn:MA}
M_A=J_AJ_A^\top=2\sum_{t=1}^m v_t v_t^\top,
\qquad v_t=\Ad(P_{t+1})a_{i_t}
\end{equation}
where $P_t$ is as defined in \eqref{eq:Gt-Pt}.
We show that $M_A$ is well-conditioned with high probability by exploiting a conditional isotropy property: conditional on the updates after time $t$ in the block, the randomness of $i_t$ implies that $v_t v_t^\top$ has conditional expectation $(1/N)I$. This yields a matrix martingale difference decomposition, to which we apply Tropp's matrix Freedman inequality \cite[Theorem~1.2]{Tropp} to control the spectrum of $M_A$.

\begin{proposition}
\label{prop:covariance}
The event
\begin{equation} \label{eq:good-event}
\mathcal G_A
=
\left\{
\frac{m}{N}\le \lambda_{\min}(M_A)\le \lambda_{\max}(M_A)\le 3\frac{m}{N}
\right\}
\end{equation}
satisfies
\[
\Prob(\mathcal G_A^c)=o(1),
\]
where the probability is over the randomness of the update block
$
A=((i_t,\theta_t))_{t=1}^m.
$
\end{proposition}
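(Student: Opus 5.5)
The plan is to write $M_A$ as its conditional mean plus a matrix martingale run backwards in time, and then apply Tropp's matrix Freedman inequality \cite[Theorem~1.2]{Tropp} on the $N$-dimensional space $\so(n)$ with the Hilbert--Schmidt inner product. I identify $\so(n)$ with $\R^N$ via the orthonormal basis $a_1,\dots,a_N$, so that $v_tv_t^\top$ is an $N\times N$ symmetric matrix.

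\emph{Filtration and centering.} For $k=0,1,\dots,m$, let $\mathcal F_k$ be the $\sigma$-algebra generated by the last $k$ updates $(i_m,\theta_m),\dots,(i_{m-k+1},\theta_{m-k+1})$. For the time index $t=m-k$, the matrix $P_{t+1}=R(i_m,\theta_m)\cdots R(i_{t+1},\theta_{t+1})$ is $\mathcal F_k$-measurable. The index $i_t$ is uniform on $\{1,\dots,N\}$ and independent of $\mathcal F_k$. Since $\Ad(P_{t+1})$ is an orthogonal map of $(\so(n),\ip{\cdot}{\cdot}_{\HS})$ and $\sum_i a_ia_i^\top=I_N$, we get
\[
\E\bigl[v_tv_t^\top\mid \mathcal F_{m-t}\bigr]=\Ad(P_{t+1})\Bigl(\tfrac1N\textstyle\sum_{i=1}^N a_ia_i^\top\Bigr)\Ad(P_{t+1})^\top=\tfrac1N I_N .
\]
Set $X_t=v_tv_t^\top-\frac1N I_N$. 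Then $X_t$ is $\mathcal F_{m-t+1}$-measurable and has zero conditional mean given $\mathcal F_{m-t}$. Ordering the terms as $t=m,m-1,\dots,1$ therefore makes $S=\sum_{t=1}^m X_t$ a matrix martingale, and
\[
M_A=\frac{2m}{N}I_N+2S.
\]
It thus suffices to show $\Prob\bigl(\norm{S}_{\op}>\frac{m}{2N}\bigr)=o(1)$. Indeed, on the complementary event, $\frac mN I_N\preceq M_A\preceq 3\frac mN I_N$. The angles $\theta_t$ enter only through the filtration and play no further role.

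\emph{Freedman parameters.} Since $\norm{v_t}_{\HS}=1$, the matrix $v_tv_t^\top$ is a rank-one projection. Hence $\norm{X_t}_{\op}\le 1=:R$. For the predictable quadratic variation, using $(vv^\top)^2=vv^\top$,
\[
\E[X_t^2\mid\mathcal F_{m-t}]=\E[v_tv_t^\top\mid\mathcal F_{m-t}]-\tfrac1{N^2}I_N=\Bigl(\tfrac1N-\tfrac1{N^2}\Bigr)I_N\preceq \tfrac1N I_N .
\]
So the predictable variation satisfies $\norm{W_m}_{\op}\le m/N=:\sigma^2$ deterministically. Freedman's inequality then gives, with $s=\frac m{2N}$,
\[
\Prob\bigl(\norm{S}_{\op}\ge s\bigr)\le 2N\exp\Bigl(-\frac{s^2/2}{\sigma^2+Rs/3}\Bigr)=2N\exp\Bigl(-\frac{3m}{28N}\Bigr).
\]
With $m\ge C_1N\log N$ and $C_1>10$, the exponent is at least $\frac{3C_1}{28}\log N>1.07\log N$. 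The bound is therefore at most $2N^{-0.07}=o(1)$. (Applying Freedman to both $S$ and $-S$ covers the two-sided bound, at the cost of a factor $2$.)

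\emph{Main obstacle.} The main point requiring care is the measurability structure. The forward filtration does not work, because $v_t$ depends on the future updates through $P_{t+1}$. One must check that, in reversed time, each new term uses exactly one fresh index $i_t$ that is independent of everything already revealed, while its conjugating matrix $P_{t+1}$ is already measurable. Once this is in place, the conditional isotropy, the uniform bound $R=1$, and the deterministic variance bound follow directly. The remaining work is only the constant bookkeeping above, which is where the assumption $C_1>10$ is used.
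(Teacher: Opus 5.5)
Your proposal is correct and matches the paper's proof: it uses the same reversed-time filtration and the same conditional isotropy $\E[v_tv_t^\top\mid\cdot]=\frac1N I_N$ from orthogonality of $\Ad(P_{t+1})$. It also uses the same centered rank-one martingale differences with $R=1$ and predictable variation at most $\frac mN I_N$, and the same application of Tropp's matrix Freedman inequality at $s=\frac{m}{2N}$, giving $2N\exp\bigl(-\frac{3m}{28N}\bigr)=o(1)$ because $C_1>10>\frac{28}{3}$.
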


\begin{proof}
We identify $\so(n)$ with $\R^N$ through the ordered orthonormal basis
$(a_i)_{i=1}^N$. Recall from \eqref{eq:vt-def} that
\[
v_t=\Ad(P_{t+1})a_{i_t}\in \so(n),
\]
and define
\begin{equation}
\label{eq:C-def-explicit}
C_A=\sum_{t=1}^m v_tv_t^\top.
\end{equation}
By Lemma~\ref{lem:jacobian}, we have
\[
J_A e_t=\sqrt{2}\,v_t,
\]
and therefore
\begin{equation}
\label{eq:M-from-C-explicit}
M_A=J_AJ_A^\top=2\sum_{t=1}^m v_tv_t^\top=2C_A.
\end{equation}

We now introduce the backwards filtration
\[
\mathcal F_t=\sigma\bigl(\{(i_s,\theta_s)\}_{s\ge t}\bigr),
\qquad 1\le t\le m+1.
\]
Since $P_{t+1}$ is $\mathcal F_{t+1}$-measurable and $i_t$ is uniform on
$\{1,\dots,N\}$ independently of $\mathcal F_{t+1}$, we obtain
\begin{align}
\E\bigl[v_tv_t^\top\mid \mathcal F_{t+1}\bigr]
&=
\Ad(P_{t+1})
\Bigl(
\frac1N\sum_{i=1}^N a_ia_i^\top
\Bigr)
\Ad(P_{t+1})^\top \notag\\
&=
\frac1N I_N.
\label{EqMartDef}
\end{align}
Define
\begin{equation}
\label{eq:Yt-def-explicit}
Y_t=v_tv_t^\top-\frac1N I_N.
\end{equation}
Then
\[
\E(Y_t\mid \mathcal F_{t+1})=0,
\]
so, after reversing time, $(Y_t)$ is a self-adjoint matrix martingale
difference sequence.

Since $v_tv_t^\top$ is a rank-one orthogonal projection, its eigenvalues are
$1,0,\dots,0$. 
Consequently,
\begin{equation}
\label{eq:Yt-norm-explicit}
\|Y_t\|_{\op}=1-\frac1N\le 1.
\end{equation}

Next, using $(v_tv_t^\top)^2=v_tv_t^\top$, we compute
\begin{align*}
Y_t^2
&=
\Bigl(v_tv_t^\top-\frac1N I_N\Bigr)^2 \\
&=
\Bigl(1-\frac2N\Bigr)v_tv_t^\top+\frac1{N^2}I_N.
\end{align*}
Taking conditional expectation and using
\eqref{EqMartDef} gives
\begin{align}
\E(Y_t^2\mid \mathcal F_{t+1})
&=
\Bigl(1-\frac2N\Bigr)\frac1N I_N+\frac1{N^2}I_N \notag\\
&=
\Bigl(\frac1N-\frac1{N^2}\Bigr)I_N
\preceq
\frac1N I_N.
\label{eq:Yt-var-explicit}
\end{align}
Hence the predictable quadratic variation satisfies
\begin{equation}
\label{eq:Wm-explicit}
W_m:=\sum_{t=1}^m \E(Y_t^2\mid \mathcal F_{t+1})
\preceq \frac{m}{N}I_N.
\end{equation}

We now apply Tropp's matrix Freedman inequality \cite[Theorem~1.2]{Tropp}.
Using \eqref{eq:Yt-norm-explicit} and \eqref{eq:Wm-explicit}, for every
$s>0$,
\[
\Prob\!\left(
\lambda_{\max}\Bigl(\sum_{t=1}^m Y_t\Bigr)\ge s
\right)
\le
N\exp\!\left(
-\frac{s^2/2}{m/N+s/3}
\right).
\]
Applying the same bound to $-Y_t$ yields
\begin{equation}
\label{eq:freedman-both-sides-explicit}
\Prob\!\left(
\left\|\sum_{t=1}^m Y_t\right\|_{\op}\ge s
\right)
\le
2N\exp\!\left(
-\frac{s^2/2}{m/N+s/3}
\right).
\end{equation}
From \eqref{eq:C-def-explicit} and \eqref{eq:Yt-def-explicit},
\[
C_A
=
\sum_{t=1}^m v_tv_t^\top
=
\frac{m}{N}I_N+\sum_{t=1}^m Y_t.
\]
Choose $s=\frac12\frac{m}{N}$ so that
\begin{equation}
\label{eq:CA-concentration-explicit}
\Prob\!\left(
\left\|C_A-\frac{m}{N}I_N\right\|_{\op}\ge \frac12\frac{m}{N}
\right)
\le
2N\exp\!\left(-\frac{3}{28}\frac{m}{N}\right).
\end{equation}
Define the event
\[
\mathcal E
:=
\left\{
\left\|C_A-\frac{m}{N}I_N\right\|_{\op}
<
\frac12\frac{m}{N}
\right\}.
\]
Then by \eqref{eq:CA-concentration-explicit},
\[
\Prob(\mathcal E^c)
\le
2N\exp\!\left(-\frac{3}{28}\frac{m}{N}\right).
\]
On the event $\mathcal E$, we have
\[
\frac12\frac{m}{N}I_N
\preceq
C_A
\preceq
\frac32\frac{m}{N}I_N.
\]
Using \eqref{eq:M-from-C-explicit}, this implies
\[
\frac{m}{N}I_N
\preceq
M_A
\preceq
3\frac{m}{N}I_N.
\]
Equivalently,
\[
\frac{m}{N}\le \lambda_{\min}(M_A)\le \lambda_{\max}(M_A)\le 3\frac{m}{N},
\]
so that $\mathcal E \subseteq \mathcal G_A$. Therefore,
\[
\Prob(\mathcal G_A^c)
\le
\Prob(\mathcal E^c)
\le
2N\exp\!\left(-\frac{3}{28}\frac{m}{N}\right).
\]
Since $m \ge C_1 N \log N$ and $C_1 > 10 > \frac{28}{3}$ , we obtain
\[
\Prob(\mathcal G_A^c)
\le
2N^{\,1-\frac{3}{28}C_1} = o(1)
\]
and the proof is finished.
\end{proof}
\begin{remark} \label{rem:keymallrm}
The key input in Proposition~\ref{prop:covariance} is the conditional isotropy identity \eqref{EqMartDef},
\[
\E\bigl[v_t v_t^\top \mid \mathcal F_{t+1}\bigr] = \frac{1}{N} I,
\]
which follows from two structural features of Kac’s walk: the uniform choice of the coordinate direction $i_t$, and the fact that the adjoint action $\Ad(g)$ is orthogonal on $\so(n)$. This construction suggests a general approach for proving mixing bounds for high-dimensional local-update Markov chains: if the update directions are sufficiently randomized and their transport does not distort directions too severely, one can analyze the spread of the chain via the covariance of transported directions and control it using matrix concentration.
\end{remark}

\section{Gaussian approximation of the perturbation block}\label{sec:transport}

We now show that the perturbation block is quite close to an appropriate Gaussian. Recall from Section~\ref{subsec:global-notation} that we fix a fresh block of updates
\[
A=((i_1,\theta_1),\dots,(i_m,\theta_m)),
\]
and introduce an independent Gaussian perturbation
\[
\Delta \sim \mathcal N(0,\sigma_n^2 I_m).
\]
The endpoint of the perturbed block is described by
\[
H_A(\Delta)=L_A(\Delta)L_A(0)^{-1},
\qquad
U_A := u_A(\Delta)=\log H_A(\Delta)\in \so(n).
\]

By Lemma~\ref{lem:marginal}, $H_A(\Delta)$ has the correct marginal law for the $m$-step update. By Lemma \ref{lem:local-log}, the logarithm is a bijection on a reasonably large ball, so (after a small truncation) it turns out to suffice to analyze the distribution of $U_A$. Throughout this section, we condition on the block $A$ and study the law of $U_A$ with respect to the Gaussian randomness in $\Delta$.

From Section~\ref{sec:perturbation}, we have the expansion
\[
u_A(\delta)=J_A\delta + r_A(\delta),
\]
valid for $\delta$ in a small neighborhood of the origin, while Section~\ref{sec:covariance} shows that the covariance
\[
M_A = J_A J_A^\top
\]
is well-conditioned with high probability over $A$. Together, these results suggest that $U_A$ should be well approximated by the Gaussian random variable
\[
J_A \Delta \sim \mathcal N(0,\sigma_n^2 M_A).
\]
To make this precise, we restrict to the high-probability event
\begin{equation} \label{eq:E-def}
E = \Big \{\|\Delta\|_2 \le 2\sigma_n \sqrt{m} \Big \}.
\end{equation}
Under our choice of parameters from \eqref{eq:scales}, we have
\[
2\sigma_n\sqrt{m} = 2N^{-3}\sqrt{m} \ll m^{-1/2},
\]
so the radius condition \eqref{eq:radius} holds on $E$ for all sufficiently large $n$. In particular, $U_A$ is well-defined on $E$, and the linear approximation above is valid on $E$.
Set
\begin{equation}
\label{eq:Sigma-gamma-def}
\Sigma_A := \sigma_n^2 M_A,
\qquad
\gamma_A := \mathcal N(0,\Sigma_A).
\end{equation}
We will compare the conditional law $\Law(U_A \mid A; E)$ to the Gaussian measure
$\gamma_A$ and show that they are close in total variation; see Proposition~\ref{prop:gaussian-approx}. We commit a small abuse of notation when describing the conditional distribution  $\Law(U_A \mid A; E)$: by conditioning on the random variable $A$ we are conditioning on the $\sigma$-algebra generated by $A$, while by conditioning on $E$ we are conditioning on the event $E$ happening (rather than conditioning on the $\sigma$-algebra generated by the $E$). This causes no real difficulties as $\sigma(E), \sigma(A)$ are independent.

We begin with a Gaussian stability estimate for small perturbations of the
identity.

\begin{proposition}[Gaussian stability under small perturbations]
\label{prop:near-identity}
Let $\gamma_d=\mathcal N(0,I_d)$. Let $R:\R^d\to\R^d$ be a $C^1$ map satisfying
\[
\sup_{x\in\R^d}\|R(x)\|_2\le \alpha,
\qquad
\sup_{x\in\R^d}\|\dd R(x)\|_{\op}\le \beta
\]
with $0<\beta\le 1/2$. Set $T=\Id+R$. Then $T$ is a global $C^1$
diffeomorphism, and there exists a universal constant $C_{\mathrm{NI}}>0$ such
that
\begin{equation}
\label{eq:near-identity-kl}
\KL(T_\#\gamma_d\,\|\,\gamma_d)
\le
C_{\mathrm{NI}}\bigl(\alpha^2+d\beta^2\bigr),
\end{equation}
and
\begin{equation}
\label{eq:near-identity-tv}
\|T_\#\gamma_d-\gamma_d\|_{\TV}
\le
C_{\mathrm{NI}}\bigl(\alpha+\sqrt d\,\beta\bigr).
\end{equation}
\end{proposition}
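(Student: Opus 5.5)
First I will establish that $T=\Id+R$ is a global $C^1$ diffeomorphism. Since $\|\dd R\|_{\op}\le\beta\le 1/2$, the map $R$ is $\beta$-Lipschitz. Hence $\|T(x)-T(y)\|_2\ge(1-\beta)\|x-y\|_2$, so $T$ is injective. For surjectivity, fix $y$ and note that $x\mapsto y-R(x)$ is a contraction; its fixed point solves $T(x)=y$. The differential $\dd T=I+\dd R$ is invertible everywhere, so the inverse function theorem makes $T^{-1}$ a $C^1$ map. Both $T$ and $T^{-1}$ are then bi-Lipschitz with constants in $[1/2,2]$.

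Second, I will write down the density of $T_\#\gamma_d$ by change of variables. Let $\phi$ denote the standard Gaussian density. Then
\[
\KL(T_\#\gamma_d\,\|\,\gamma_d)=\E_{X\sim\gamma_d}\Bigl[\log\frac{\phi(X)}{\phi(T(X))}-\log\abs{\det \dd T(X)}\Bigr].
\]
The Gaussian term expands as
\[
\tfrac12\|X+R(X)\|_2^2-\tfrac12\|X\|_2^2=\ip{X}{R(X)}+\tfrac12\|R(X)\|_2^2 .
\]
The last piece is at most $\alpha^2/2$. For the cross term I will use Gaussian integration by parts (Stein's identity):
\[
\E\ip{X}{R(X)}=\E\,\Tr\,\dd R(X).
\]
This identity requires $R$ to be bounded with bounded derivative, which holds here.

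Third, I will handle the determinant. Let $\lambda_j$ be the eigenvalues of $\dd R(X)$, possibly complex, each with $\abs{\lambda_j}\le\beta\le1/2$. Then
\[
\log\abs{\det(I+\dd R)}=\mathrm{Re}\sum_j\log(1+\lambda_j)\ge\mathrm{Re}\sum_j\lambda_j-C\sum_j\abs{\lambda_j}^2 .
\]
Since $\mathrm{Re}\sum_j\lambda_j=\Tr\,\dd R$, the trace terms cancel exactly against the Stein term. What remains is bounded by $C\sum_j\abs{\lambda_j}^2$. By Schur's inequality,
\[
\sum_j\abs{\lambda_j}^2\le\|\dd R\|_{\HS}^2\le d\,\|\dd R\|_{\op}^2\le d\beta^2 .
\]
Together this gives $\KL\le \alpha^2/2+Cd\beta^2$, which is \eqref{eq:near-identity-kl}. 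The total variation bound \eqref{eq:near-identity-tv} then follows from Pinsker's inequality.

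The main obstacle is the cancellation of the trace terms. A naive bound on $\E\ip{X}{R(X)}$ gives only $\alpha\sqrt d$, and a naive bound on $\Tr\,\dd R$ gives $d\beta$; neither yields the stated estimate. So the argument must pair Stein's identity with the first-order expansion of $\log\det$ as above. The care needed is that $\dd R$ need not be symmetric, so I will work with complex eigenvalues and real parts as shown.
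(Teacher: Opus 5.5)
Your proof is correct and follows the paper's argument essentially step for step: global invertibility via the contraction $x\mapsto y-R(x)$, the change-of-variables formula for the KL divergence, Gaussian integration by parts to cancel $\E\,\Tr\,\dd R(X)$ against the linear term of $\log\det(I+\dd R)$, and then Pinsker. The only difference is cosmetic: you bound the quadratic remainder of the log-determinant through complex eigenvalues and Schur's inequality $\sum_j\abs{\lambda_j}^2\le\norm{\dd R}_{\HS}^2$, whereas the paper expands $\Tr\log(I+B)$ as a power series and uses $\abs{\Tr(B^k)}\le\norm{B}_{\HS}^2\norm{B}_{\op}^{k-2}$; both yield the same $C\,d\beta^2$ bound.
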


\begin{proof}
First we show that $T$ is a global diffeomorphism. For $x,y\in\R^d$,
\[
\|T(x)-T(y)\|_2
\ge
\|x-y\|_2-\|R(x)-R(y)\|_2
\ge
(1-\beta)\|x-y\|_2.
\]
Thus $T$ is injective. For surjectivity, fix $z\in\R^d$ and consider
\[
\Phi_z(x)=z-R(x).
\]
Since $\Phi_z$ is a contraction with Lipschitz constant at most $\beta<1$, it has
a unique fixed point. This fixed point satisfies $T(x)=z$. Hence $T$ is bijective.
Moreover,
\[
\dd T(x)=I+\dd R(x)
\]
is invertible for every $x$, so the inverse function theorem implies that $T$ is
a global $C^1$ diffeomorphism.

Let $X\sim\gamma_d$. By the change-of-variables formula,
\[
\frac{\dd(T_\#\gamma_d)}{\dd\gamma_d}(T(x))
=
\frac{\exp(-\|x\|_2^2/2)}
{\exp(-\|T(x)\|_2^2/2)\det \dd T(x)}.
\]
Therefore
\begin{equation}
\label{eq:entropy-identity}
\KL(T_\#\gamma_d\,\|\,\gamma_d)
=
\E\left[
\frac{\|T(X)\|_2^2-\|X\|_2^2}{2}
-
\log\det \dd T(X)
\right].
\end{equation}

Set $B(x)=\dd R(x)$. Since $\|B(x)\|_{\op}\le 1/2$, the series
\[
\log\det(I+B)
=
\Tr\log(I+B)
=
\Tr B+\sum_{k\ge2}\frac{(-1)^{k+1}}{k}\Tr(B^k)
\]
converges absolutely. We now claim that there is a universal constant $C_{\log}$ such that
\begin{equation}
\label{eq:logdet-near-identity}
-\log\det(I+B)+\Tr B
\le
C_{\log}\|B\|_{\HS}^2 .
\end{equation}
To see this, note that
\[
|\Tr(B^k)|
\le
\|B\|_{\HS}^2\|B\|_{\op}^{k-2},
\qquad k\ge 2,
\]
and the geometric series is bounded uniformly when $\|B\|_{\op}\le 1/2$.
Using $T(X)=X+R(X)$ in \eqref{eq:entropy-identity}, so
\[
\KL(T_\#\gamma_d\,\|\,\gamma_d)
\le
\E\bigl[\langle X,R(X)\rangle-\Tr(\dd R(X))\bigr]
+
\frac12\E\|R(X)\|_2^2
+
C_{\log}\E\|\dd R(X)\|_{\HS}^2.
\]
The first term vanishes by Gaussian integration by parts:
\[
\E\langle X,R(X)\rangle
=
\E\operatorname{div}R(X)
=
\E\Tr(\dd R(X)).
\]
Hence
\[
\KL(T_\#\gamma_d\,\|\,\gamma_d)
\le
\frac12\alpha^2+C_{\log}d\beta^2.
\]
Choose
\[
C_{\mathrm{NI}}\ge \max\{1/2,C_{\log}\}.
\]
This proves \eqref{eq:near-identity-kl}. 

Pinsker's inequality gives
\[
\|T_\#\gamma_d-\gamma_d\|_{\TV}
\le
\sqrt{\frac12 C_{\mathrm{NI}}\bigl(\alpha^2+d\beta^2\bigr)}
\le
C_{\mathrm{NI}}\bigl(\alpha+\sqrt d\,\beta\bigr),
\]
after increasing \(C_{\mathrm{NI}}\) once and for all. This proves
\eqref{eq:near-identity-tv} and we are done.
\end{proof}
We will use the following functions to smoothly truncate our distributions:

\begin{lemma}[Radial cutoff]
\label{lem:radial-cutoff}
There is an absolute constant $C_{\mathrm{cut}}<\infty$ such that for every
$d\ge 1$ and every $R>0$, there exists a smooth function
$\Psi_{d,R}:\R^d\to[0,1]$ satisfying
\[
\Psi_{d,R}(x)=1 \quad \text{if } \|x\|_2\le 2R,
\qquad
\Psi_{d,R}(x)=0 \quad \text{if } \|x\|_2\ge 3R,
\]
and
\[
\sup_{x\in\R^d}\|\nabla\Psi_{d,R}(x)\|_2
\le
\frac{C_{\mathrm{cut}}}{R}.
\]
\end{lemma}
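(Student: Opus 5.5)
The plan is to build $\Psi_{d,R}$ as a smooth one-dimensional profile composed with the radius. The constant $C_{\mathrm{cut}}$ will come only from the profile's derivative, so it will not depend on $d$.

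First, fix once and for all a smooth function $\phi:\R\to[0,1]$ with $\phi(s)=1$ for $s\le 2$ and $\phi(s)=0$ for $s\ge 3$. The standard construction works. Let $f(s)=e^{-1/s}$ for $s>0$ and $f(s)=0$ otherwise, and set
\[
\phi(s)=\frac{f(3-s)}{f(3-s)+f(s-2)}.
\]
The denominator never vanishes, since at least one of $3-s$ and $s-2$ is positive. So $\phi$ is smooth, takes values in $[0,1]$, and has the required boundary behavior. Because $\phi'$ is continuous and supported in $[2,3]$, it is bounded; set $C_{\mathrm{cut}}:=\sup_s|\phi'(s)|<\infty$. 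This is an absolute constant.

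Next, define
\[
\Psi_{d,R}(x)=\phi\bigl(\|x\|_2/R\bigr).
\]
The only subtlety is smoothness at the origin, where $\|x\|_2$ is not differentiable. This causes no problem: $\phi$ is constant near every $s\le 2$, so $\Psi_{d,R}\equiv 1$ on the ball $\{\|x\|_2<2R\}$. Away from the origin, $x\mapsto\|x\|_2$ is smooth, so $\Psi_{d,R}$ is smooth on all of $\R^d$. The value and support conditions follow directly from those of $\phi$.

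Finally, for $x\neq 0$ the chain rule gives
\[
\nabla\Psi_{d,R}(x)=\frac{1}{R}\,\phi'\bigl(\|x\|_2/R\bigr)\,\frac{x}{\|x\|_2}.
\]
Its Euclidean norm is at most $C_{\mathrm{cut}}/R$, because $x/\|x\|_2$ is a unit vector. At $x=0$ the gradient is zero. This gives the gradient bound uniformly in $d$ and $R$.

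The argument has no real obstacle. The two points to check are smoothness at the origin, which the flatness of $\phi$ near $0$ handles, and dimension-independence of $C_{\mathrm{cut}}$, which holds because the gradient of a radial function has norm $|\phi'|/R$ in every dimension.
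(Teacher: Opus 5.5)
Your proof is correct and takes essentially the same route as the paper: a radial function built from a fixed one-dimensional smooth profile. The only difference is that the paper composes its profile with $\|x\|_2^2/R^2$, which makes smoothness at the origin automatic and gives the bound $6\|\eta'\|_\infty/R$, whereas you compose with $\|x\|_2/R$ and handle the origin using the flatness of $\phi$ on $(-\infty,2]$.
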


\begin{proof}
Choose a smooth function $\eta:[0,\infty)\to[0,1]$ such that
\[
\eta(s)=1 \quad \text{for } s\le 4,
\qquad
\eta(s)=0 \quad \text{for } s\ge 9.
\]
Set
\[
\Psi_{d,R}(x)=\eta\!\left(\frac{\|x\|_2^2}{R^2}\right).
\]
Then
\[
\nabla\Psi_{d,R}(x)
=
\eta'\!\left(\frac{\|x\|_2^2}{R^2}\right)\frac{2x}{R^2}.
\]
The derivative can be nonzero only when $2R\le \|x\|_2\le 3R$, so
\[
\|\nabla\Psi_{d,R}(x)\|_2
\le
\frac{6\|\eta'\|_\infty}{R}.
\]
Thus the claim holds with $C_{\mathrm{cut}}=6\|\eta'\|_\infty$.
\end{proof}

We now use Proposition \ref{prop:near-identity} to analyze our second stage coupling. Recall the Gaussian measure $\gamma_{A}$ from Equation \eqref{eq:Sigma-gamma-def} and the event $\mathcal G_A$ defined in \eqref{eq:good-event}.
\begin{proposition}[Gaussian approximation for the perturbation block]
\label{prop:gaussian-approx}
There exist absolute constants $c_{\mathrm{GA}},C_{\mathrm{GA}}>0$ such that, for all sufficiently large $n$, the following holds uniformly over all $A\in\mathcal G_A$:
\[
\|\Law(U_A\mid A;E)-\gamma_A\|_{\TV}
\le
C_{\mathrm{GA}}\frac{\sigma_n m^2}{\sqrt{m/N}}+e^{-c_{\mathrm{GA}}m}.
\]
In particular, under the scale choice \eqref{eq:scales}, the right-hand side is $o(1)$.
\end{proposition}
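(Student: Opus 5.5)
Conditional on $A\in\mathcal G_A$, we reduce to Proposition~\ref{prop:near-identity} by whitening. Write $\Delta=\sigma_n Z$ with $Z\sim\mathcal N(0,I_m)$. Since $J_A$ is $N\times m$ of full row rank on $\mathcal G_A$, decompose $\R^m=\mathrm{row}(J_A)\oplus\ker(J_A)$. Set $S=M_A^{-1/2}J_A:\R^m\to\R^N$. Then $SS^\top=I_N$, so $X:=SZ\sim\mathcal N(0,I_N)$, and $X$ is independent of the kernel component $Y:=Z-S^\top X$. Define the whitened variable
\[
\widetilde U=\sigma_n^{-1}M_A^{-1/2}U_A=X+\sigma_n^{-1}M_A^{-1/2}r_A(\sigma_n Z).
\]
The target $\gamma_A$ becomes $\gamma_N$ under the same whitening, and total variation is invariant under this linear bijection. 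So it suffices to compare $\Law(\widetilde U\mid A;E)$ with $\gamma_N$.

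\textbf{Step 1: reduce to a fixed kernel component.} Condition on $Y=y$. The map $x\mapsto \widetilde U$ is then $T_y=\Id+R_y$ with
\[
R_y(x)=\sigma_n^{-1}M_A^{-1/2}r_A\bigl(\sigma_n(S^\top x+y)\bigr).
\]
We must deal with the event $E$ and with the fact that $r_A$ is only defined on the ball \eqref{eq:radius}. To do this, replace $R_y$ by the truncation $\Psi(x,y)R_y(x)$, using the radial cutoff of Lemma~\ref{lem:radial-cutoff} in the full variable $z=S^\top x+y$ at radius of order $\sqrt m$. Then Gaussian concentration for $\|Z\|_2$ shows that the event $E$, the cutoff region, and the untruncated region differ only on a set of probability $e^{-cm}$. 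This is the source of the $e^{-c_{\mathrm{GA}}m}$ term, once the conditioning on $E$ is accounted for (since $\Prob(E)\ge 1-e^{-cm}$).

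\textbf{Step 2: bound $\alpha$ and $\beta$.} On the support of the cutoff we have $\|\sigma_n z\|_2\lesssim\sigma_n\sqrt m\ll m^{-1/2}$, so Proposition~\ref{prop:quadratic} applies. Using $\|M_A^{-1/2}\|_{\op}\le (m/N)^{-1/2}$, we get
\[
\alpha\lesssim \sigma_n^{-1}(m/N)^{-1/2}\,m\,\sigma_n^2 m=\frac{\sigma_n m^2}{\sqrt{m/N}} .
\]
For the derivative, chain the bound \eqref{eq:quadratic-derivative} with $\|S^\top\|_{\op}=1$, and handle the gradient of the cutoff through Lemma~\ref{lem:radial-cutoff}, which contributes $\|R_y\|/\sqrt m$. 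This gives
\[
\beta\lesssim (m/N)^{-1/2}\,m\,\sigma_n\sqrt m .
\]
Hence $\sqrt N\beta\lesssim \sqrt N\sigma_n m^{3/2}/\sqrt{m/N}$. Since $N\le m$, this is at most a constant times $\sigma_n m^2/\sqrt{m/N}$. Moreover $\beta\le 1/2$ for large $n$ by the scales \eqref{eq:scales}.

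\textbf{Step 3: conclude.} Proposition~\ref{prop:near-identity} now gives, for each $y$, that $(T_y)_\#\gamma_N$ is within $C(\alpha+\sqrt N\beta)$ of $\gamma_N$. Averaging over $y$ by convexity of total variation (the mixture of the pushforwards is the law of the truncated $\widetilde U$), and adding the $e^{-cm}$ truncation cost from Step 1, gives the stated bound. Finally, the right-hand side is $o(1)$ because
\[
\frac{\sigma_n m^2}{\sqrt{m/N}}\approx N^{-3}N^2\log^2N/\sqrt{\log N}\to 0 .
\]

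\textbf{Main obstacle.} The step requiring the most care is Step 1. We must make the truncation compatible with Proposition~\ref{prop:near-identity}, which requires global bounds on $R$, while the conditioning on $E$ is a condition on the full $Z$ rather than on $X$. The plan is to compare three laws by the triangle inequality: the law conditioned on $E$, the unconditioned law of the truncated map, and $\gamma_N$. Each comparison costs at most $\Prob(E^c)$ plus $\Prob(\|Z\|_2>2\sqrt m)$, both of which are $e^{-cm}$.
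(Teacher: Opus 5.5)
Your proposal is correct and follows essentially the same route as the paper: truncate $r_A$ with a radial cutoff at scale $\sigma_n\sqrt m$ (so it agrees with the true remainder on $E$), split the Gaussian into the component seen by $J_A$ and an independent kernel component, apply Proposition~\ref{prop:near-identity} fiberwise with $\alpha\lesssim \sigma_n m^2/\sqrt{m/N}$ and $\beta\lesssim \sigma_n m^{3/2}/\sqrt{m/N}$, average over the kernel component, and pay $O(\Prob(E^c))$ to remove the cutoff. The only difference is cosmetic: you whiten with $S=M_A^{-1/2}J_A$, whereas the paper uses the singular value decomposition $J_A=Q[\Lambda\ 0]W^\top$, and the two whitened variables differ only by the orthogonal matrix $Q$.
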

\begin{proof}
Let $Z = \sigma_{n}^{-1} \Delta$ so that $Z\sim\mathcal N(0,I_m)$. From \eqref{eq:E-def}, recall the event
\[
E=\{\|\Delta\|_2\le 2\sigma_n\sqrt m\}
=\{\|Z\|_2\le 2\sqrt m\}.
\]
By Gaussian concentration, there exists an absolute constant
$c_{\mathrm{tail}}>0$ such that
\begin{equation}
\label{eq:chi-tail-clean}
\Prob(E^c)\le e^{-c_{\mathrm{tail}}m}.
\end{equation}

We introduce a cutoff so that the logarithmic expansion from
Proposition~\ref{prop:quadratic} is used only inside its domain. Using the construction in
Lemma~\ref{lem:radial-cutoff}, choose
\[
\chi=\Psi_{m,\sigma_n\sqrt m}.
\]
Then
\[
\chi(\delta)=1 \quad \text{if } \|\delta\|_2\le 2\sigma_n\sqrt m,
\qquad
\chi(\delta)=0 \quad \text{if } \|\delta\|_2\ge 3\sigma_n\sqrt m,
\]
and
\begin{equation}
\label{eq:cutoff-grad-bound}
\sup_{\delta\in\R^m}\|\nabla\chi(\delta)\|_2
\le
\frac{C_{\mathrm{cut}}}{\sigma_n\sqrt m}.
\end{equation}
Define
\begin{equation}
\label{eq:trunc-defs-clean}
\widetilde r_A(\delta):=\chi(\delta)r_A(\delta),
\qquad
\widetilde u_A(\delta):=J_A\delta+\widetilde r_A(\delta),
\qquad
\widetilde U_A:=\widetilde u_A(\Delta).
\end{equation}
Since $\chi\equiv 1$ on $E$,
\begin{equation}
\label{eq:UA-tildeUA-on-E}
\widetilde U_A=U_A
\qquad\text{on }E.
\end{equation}

We next record the bounds on the truncated remainder. By
Proposition~\ref{prop:quadratic}, there is an absolute constant
$C_{0}>0$ such that, whenever
\[
\|\delta\|_2\le c_0/\sqrt m,
\]
we have
\[
\|r_A(\delta)\|_{\HS}\le C_{0}m\|\delta\|_2^2,
\qquad
\|\dd r_A(\delta)\|_{\op}\le C_{0}m\|\delta\|_2.
\]
Under \eqref{eq:scales},
\[
3\sigma_n\sqrt m\le c_0/\sqrt m
\]
for all sufficiently large $n$. Hence the preceding bounds apply on
$\mathrm{supp}(\chi)$. We now claim that there exist absolute constants
$C_r,C_{dr}>0$ such that
\begin{equation}
\label{eq:trunc-r-clean}
\sup_{\delta\in\R^m}\|\widetilde r_A(\delta)\|_{\HS}
\le
C_r\sigma_n^2m^2,
\end{equation}
and
\begin{equation}
\label{eq:trunc-dr-clean}
\sup_{\delta\in\R^m}\|\dd\widetilde r_A(\delta)\|_{\op}
\le
C_{dr}\sigma_n m^{3/2}.
\end{equation}
To see this, on $\mathrm{supp}(\chi)$,
\[
\|r_A(\delta)\|_{\HS}\lesssim \sigma_n^2m^2,
\qquad
\|\dd r_A(\delta)\|_{\op}\lesssim \sigma_n m^{3/2}.
\]
Moreover,
\[
\dd\widetilde r_A
=
\chi\,\dd r_A+(\nabla\chi)\otimes r_A.
\]
The term $\chi\,\dd r_A$ is controlled by the bound on $\dd r_A$, while $(\nabla\chi)\otimes r_A$ is controlled by \eqref{eq:cutoff-grad-bound} together with the bound on $r_A$

Fix now $A\in\mathcal G_A$. By the definition of the good set $\mathcal G_A$,
\[
\lambda_{\min}(M_A)\ge \frac{m}{N}.
\]
Let
\[
\rho_A:=s_{\min}(J_A)=\sqrt{\lambda_{\min}(M_A)}.
\]
Thus
\begin{equation}
\label{eq:rho-lower-clean}
\rho_A\ge \sqrt{m/N}.
\end{equation}
Take a singular value decomposition
\[
J_A=Q[\Lambda\ \ 0]W^\top,
\]
where $Q\in O(N)$, $W\in O(m)$, and
\[
\Lambda=\diag(s_1,\dots,s_N),
\qquad
s_i\ge \rho_A.
\]
Since $W^\top Z\sim\mathcal N(0,I_m)$, write
\[
W^\top Z=(X,Y),
\qquad
X\in\R^N,\quad Y\in\R^{m-N},
\]
where $X\sim\mathcal N(0,I_N)$ and $Y\sim\mathcal N(0,I_{m-N})$ are independent.
Since $\Delta=\sigma_n Z=\sigma_n W(X,Y)$,
\begin{equation}
\label{eq:linear-gaussian-law-clean}
J_A\Delta=\sigma_n Q\Lambda X,
\qquad
\Law(J_A\Delta)=\mathcal N(0,\Sigma_A)=\gamma_A.
\end{equation}

For fixed $y\in\R^{m-N}$, define
\[
T_{A,y}(x)
:=
x+(\sigma_n Q\Lambda)^{-1}
\widetilde r_A\bigl(\sigma_n W(x,y)\bigr),
\qquad x\in\R^N,
\]
where we identify $\so(n)$ with $\R^N$ via the fixed orthonormal basis $(a_i)_{i=1}^N$.
Then
\begin{equation}
\label{eq:trunc-factorization-clean}
\widetilde U_A
=
\sigma_n Q\Lambda\,T_{A,Y}(X).
\end{equation}
We now verify that $T_{A,y}$ is a small perturbation of the identity, uniformly
in $y$. By \eqref{eq:trunc-r-clean} and \eqref{eq:rho-lower-clean},
\[
\sup_{x\in\R^N}\|T_{A,y}(x)-x\|_2
\le
\frac{C_r\sigma_n^2m^2}{\sigma_n\rho_A}
\le
C_\alpha\frac{\sigma_n m^2}{\sqrt{m/N}},
\]
where $C_\alpha$ is an absolute constant. Similarly, by
\eqref{eq:trunc-dr-clean} and \eqref{eq:rho-lower-clean},
\[
\sup_{x\in\R^N}\|\dd T_{A,y}(x)-I\|_{\op}
\le
\frac{C_{dr}\sigma_n m^{3/2}}{\rho_A}
\le
C_\beta\frac{\sigma_n m^{3/2}}{\sqrt{m/N}},
\]
where $C_\beta$ is an absolute constant.

Set
\[
\alpha_A:=C_\alpha\frac{\sigma_n m^2}{\sqrt{m/N}},
\qquad
\beta_A:=C_\beta\frac{\sigma_n m^{3/2}}{\sqrt{m/N}}.
\]
Since
\[
\beta_A=C_\beta\sigma_n m\sqrt N\to0,
\]
we have $\beta_A\le 1/2$ for all sufficiently large $n$.

Applying Proposition~\ref{prop:near-identity} in dimension $N$, with
$\gamma_N=\mathcal N(0,I_N)$, gives, for every $y\in\R^{m-N}$,
\[
\|(T_{A,y})_\#\gamma_N-\gamma_N\|_{\TV}
\le
C_{\mathrm{NI}}\bigl(\alpha_A+\sqrt N\,\beta_A\bigr).
\]
Since $m\ge N$,
\[
\sqrt N\,\beta_A
=
C_\beta\frac{\sigma_n m^{3/2}\sqrt N}{\sqrt{m/N}}
\le
C_\beta\frac{\sigma_n m^2}{\sqrt{m/N}}.
\]
Therefore, for an absolute constant $C_{\mathrm{tr}}$,
\begin{equation}
\label{eq:trunc-gaussian-clean}
\|(T_{A,y})_\#\gamma_N-\gamma_N\|_{\TV}
\le
C_{\mathrm{tr}}\frac{\sigma_n m^2}{\sqrt{m/N}}
\qquad\text{for every }y\in\R^{m-N}.
\end{equation}

By \eqref{eq:linear-gaussian-law-clean}, \eqref{eq:trunc-factorization-clean},
and invariance of total variation under the invertible linear map $x\mapsto \sigma_n Q\Lambda x$,
\[
\|\Law(\widetilde U_A\mid A,Y=y)-\gamma_A\|_{\TV}
\le
C_{\mathrm{tr}}\frac{\sigma_n m^2}{\sqrt{m/N}}.
\]
Averaging over $Y$ gives
\begin{equation}
\label{eq:tildeUA-to-gammaA-clean}
\|\Law(\widetilde U_A\mid A)-\gamma_A\|_{\TV}
\le
C_{\mathrm{tr}}\frac{\sigma_n m^2}{\sqrt{m/N}}.
\end{equation}

It remains to remove the cutoff. Since $\widetilde U_A=U_A$ on $E$,
there exists a probability measure $\eta_A$ such that
\[
\Law(\widetilde U_A\mid A)
=
\Prob(E)\Law(U_A\mid A;E)+\Prob(E^c)\eta_A.
\]
Hence
\[
\|\Law(U_A\mid A;E)-\Law(\widetilde U_A\mid A)\|_{\TV}
\le
2\Prob(E^c)
\le
2e^{-c_{\mathrm{tail}}m}.
\]
Combining this with \eqref{eq:tildeUA-to-gammaA-clean} gives
\[
\|\Law(U_A\mid A;E)-\gamma_A\|_{\TV}
\le
C_{\mathrm{tr}}\frac{\sigma_n m^2}{\sqrt{m/N}}
+
2e^{-c_{\mathrm{tail}}m}.
\]
Now set
\[
C_{\mathrm{GA}}:=C_{\mathrm{tr}},
\qquad
c_{\mathrm{GA}}:=\frac12 c_{\mathrm{tail}}.
\]
Since \(m\to\infty\), for all sufficiently large \(n\),
\[
2e^{-c_{\mathrm{tail}}m}\le e^{-c_{\mathrm{GA}}m}.
\]
Therefore
\[
\|\Law(U_A\mid A;E)-\gamma_A\|_{\TV}
\le
C_{\mathrm{GA}}\frac{\sigma_n m^2}{\sqrt{m/N}}
+
e^{-c_{\mathrm{GA}}m}.
\]

Finally, under \eqref{eq:scales},
$
\frac{\sigma_n m^2}{\sqrt{m/N}}=o(1),
$
so the right-hand side above is $o(1)$ and this proves the proposition.
\end{proof}

\section{The transport step on the group}
\label{sec:group-transport}

In the previous section, we proved a Gaussian approximation for $U_A$; see Proposition~\ref{prop:gaussian-approx}. In this section, we transfer that logarithmic approximation to a total-variation bound for the group-valued law of $H_A(\Delta)$; see Proposition~\ref{prop:block-translate}.

\noindent
The following bound is well-known:
\begin{lemma}
\label{lem:gaussian-shift}
If $G\sim \mathcal N(0,\Sigma)$ on $\R^N$ with $\Sigma$ positive definite, then for every
$h\in\R^N$,
\begin{equation}
\label{eq:gaussian-shift}
\norm{\Law(G+h)-\Law(G)}_{\TV}
\le
\frac{1}{\sqrt{2}}\norm{\Sigma^{-1/2}h}_2.
\end{equation}
\end{lemma}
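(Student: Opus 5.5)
The plan is to reduce to the standard one-dimensional Gaussian shift computation by whitening, and then bound total variation through Pinsker's inequality. Since total variation is invariant under invertible linear maps, apply $x\mapsto \Sigma^{-1/2}x$. This sends $\Law(G)$ to $\mathcal N(0,I_N)$ and $\Law(G+h)$ to $\mathcal N(\Sigma^{-1/2}h, I_N)$. It therefore suffices to show, for $\gamma_N=\mathcal N(0,I_N)$ and any $b\in\R^N$, that
\[
\norm{\mathcal N(b,I_N)-\gamma_N}_{\TV}\le \tfrac{1}{\sqrt2}\norm{b}_2 .
\]

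Next, compute the Kullback--Leibler divergence directly. The log-density ratio of $\mathcal N(b,I_N)$ to $\gamma_N$ at $x$ is
\[
\ip{b}{x}-\tfrac12\norm{b}_2^2 .
\]
Taking the expectation under $x\sim\mathcal N(b,I_N)$ gives
\[
\KL\bigl(\mathcal N(b,I_N)\,\|\,\gamma_N\bigr)=\tfrac12\norm{b}_2^2 .
\]
Pinsker's inequality, in the form $\norm{\cdot}_{\TV}\le\sqrt{\KL/2}$ with the sup-over-sets normalization used in this paper, then yields
\[
\norm{\mathcal N(b,I_N)-\gamma_N}_{\TV}\le \tfrac12\norm{b}_2 .
\]
This is stronger than the claimed constant $1/\sqrt2$, so the stated bound follows.

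The proof has no real obstacle. The only point to check is that the Pinsker normalization matches the definition $\sup_A\abs{\nu_1(A)-\nu_2(A)}$. This is the same convention already used in the proof of Proposition~\ref{prop:near-identity}.

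As an alternative, one can rotate so that $b=\norm{b}_2e_1$. The problem then becomes one-dimensional, with exact value
\[
2\Phi\bigl(\norm{b}_2/2\bigr)-1\le \norm{b}_2/\sqrt{2\pi}.
\]
This bound comes from the fact that the standard normal density is at most $1/\sqrt{2\pi}$.
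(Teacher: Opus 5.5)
Your proof is correct and follows the same route as the paper: compute $\KL = \tfrac12\norm{\Sigma^{-1/2}h}_2^2$ for two Gaussians with equal covariance and apply Pinsker's inequality; your preliminary whitening step is only cosmetic. Using the sharp form $\norm{\cdot}_{\TV}\le\sqrt{\KL/2}$ gives you the better constant $\tfrac12$, which implies the stated $1/\sqrt{2}$.
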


\begin{proof}
The Kullback--Leibler divergence between two Gaussians with the same covariance is
\[
\KL\!\bigl(\mathcal N(h,\Sigma)\,\|\,\mathcal N(0,\Sigma)\bigr)
=
\frac12 h^\top \Sigma^{-1}h
=
\frac12 \norm{\Sigma^{-1/2}h}_2^2.
\]
Pinsker's inequality then gives \eqref{eq:gaussian-shift}.
\end{proof}

We next estimate the nonlinear correction created by passing from addition in the Lie algebra $\so(n)$
to right multiplication in the group $\SO(n)$.

\begin{lemma}
\label{lem:bch-remainder}
There exist constants $r_\ast,C_\ast>0$ independent of $n$ such that the map
\[
\Phi(\xi,h) \equiv \log\!\bigl(\exp(\xi)\exp(h)\bigr)
\]
is smooth on
\[
\{(\xi,h)\in \so(n)\times \so(n): \norm{\xi}_{\HS}\le r_\ast,\ \norm{h}_{\HS}\le r_\ast\},
\]
and the remainder
\begin{equation} \label{EqDefRemainderLemma6}
\beta_h(\xi)=\Phi(\xi,h)-\xi-h
\end{equation}
satisfies
\begin{equation}
\label{eq:bch-remainder}
\norm{\beta_h(\xi)}_{\HS}\le C_\ast \norm{\xi}_{\HS}\norm{h}_{\HS},
\qquad
\norm{\dd_\xi \beta_h(\xi)}_{\op}\le C_\ast \norm{h}_{\HS}
\end{equation}
whenever $\norm{\xi}_{\HS},\norm{h}_{\HS}\le r_\ast$.
\end{lemma}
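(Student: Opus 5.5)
The plan is to work entirely with convergent power series in the Banach-algebra norm $\norm{\cdot}_{\HS}$, which is submultiplicative ($\norm{XY}_{\HS}\le\norm{X}_{\HS}\norm{Y}_{\op}\le\norm{X}_{\HS}\norm{Y}_{\HS}$). The point of this choice is that all constants then come from \emph{scalar} majorant series and are independent of $n$. The one place where dimension could enter is through $\norm{\Id}_{\HS}=\sqrt n$. I will avoid it by never expanding terms that contain a bare identity: I write $\exp(\xi)\exp(h)=\Id+X(\xi,h)$ with
\[
X(\xi,h)=\sum_{(j,k)\neq(0,0)}\frac{\xi^j h^k}{j!\,k!},
\]
and $\log(\Id+X)=\sum_{\ell\ge1}(-1)^{\ell+1}X^\ell/\ell$. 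Then $\norm{X}_{\HS}\le e^{\norm{\xi}_{\HS}+\norm{h}_{\HS}}-1$, which is at most $1/2$ once $r_\ast$ is small. So the log series converges absolutely in $\HS$ norm, and $\Phi$ is real-analytic, in particular smooth, on the stated domain. Moreover $\Phi$ is the principal logarithm, since the spectrum of $\Id+X$ lies in the disc $\{|z-1|\le1/2\}$. Its value is skew-symmetric: $\exp(\Phi)$ is orthogonal and near $\Id$, and the principal log commutes with transposition and inversion, so $\Phi^\top=\log((\Id+X)^{-1})=-\Phi$. Hence $\Phi$ maps into $\so(n)$.

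Expanding, $\Phi(\xi,h)$ is a sum of noncommutative monomials in $\xi,h$ whose coefficients are independent of $n$. Replacing every coefficient by its absolute value and every letter by a scalar, the sum is dominated by the scalar majorant
\[
F(a,b)=-\log\bigl(2-e^{a+b}\bigr)=\sum_{\ell\ge1}\frac{(e^{a+b}-1)^\ell}{\ell},
\]
which is analytic for $a,b\ge0$ small. By submultiplicativity, any multilinear derivative of $\Phi$, evaluated on directions in $\so(n)$, is bounded in $\HS$ norm by the corresponding partial derivative of $F$ at $(\norm{\xi}_{\HS},\norm{h}_{\HS})$ times the product of the $\HS$ norms of the directions. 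This gives dimension-free bounds on all derivatives of $\Phi$ on the ball.

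Next I use the structural identities $\Phi(\xi,0)=\xi$ and $\Phi(0,h)=h$. These give $\beta_0\equiv0$ and $\beta_h(0)=0$, and therefore $\dd_\xi\beta_0\equiv0$. For the derivative bound, write
\[
\dd_\xi\beta_h(\xi)=\dd_\xi\Phi(\xi,h)-\dd_\xi\Phi(\xi,0)=\int_0^1 \partial_h\dd_\xi\Phi(\xi,sh)[h]\,\dd s.
\]
The mixed second derivative is bounded by $\partial_a\partial_bF$ on the ball, which gives $\norm{\dd_\xi\beta_h(\xi)}_{\op}\le C_\ast\norm{h}_{\HS}$. The first bound then follows from $\beta_h(\xi)=\int_0^1\dd_\xi\beta_h(s\xi)[\xi]\,\dd s$.

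The main obstacle is the first step: making sure the majorant argument is genuinely dimension-free. This is where the identity must be kept out of every expanded monomial, and where one must check that the domain of $\Phi$ lies inside the region where the log series converges absolutely. Once that is set up, the remaining estimates are routine.
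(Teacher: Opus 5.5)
Your proof is correct. Its skeleton matches the paper's. Both proofs choose $r_\ast$ so that $\exp(\xi)\exp(h)$ stays in a fixed neighbourhood of $\Id$ where the principal logarithm is analytic. Both get a dimension-free bound on the mixed derivative $\partial_h\dd_\xi\Phi$. Both then use $\Phi(\xi,0)=\xi$ and $\Phi(0,h)=h$ through integral representations. The paper writes $\beta$ as a double integral of the mixed derivative; you bound $\dd_\xi\beta_h$ first and then integrate along the ray $s\xi$, which is equivalent.

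The genuine difference is how you obtain the dimension-free derivative bounds. The paper asserts that $\exp$, matrix multiplication and the principal logarithm have first and second Fr\'echet derivatives bounded by absolute constants in Hilbert--Schmidt norm on fixed operator-norm neighbourhoods, and then applies the chain rule. Making that literally $n$-independent needs mixed-norm care, which the paper leaves implicit. For example, the derivative of $(A,B)\mapsto AB$ at $B=\exp(h)$ must be bounded using $\norm{\exp(h)}_{\op}=1$, not $\norm{\exp(h)}_{\HS}=\sqrt n$.

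Your majorant argument handles exactly this point. You expand $\log(\Id+X)$, with $X=\exp(\xi)\exp(h)-\Id$, into words of length at least one, so there is no bare identity. You then dominate everything via submultiplicativity of $\norm{\cdot}_{\HS}$ by $F(a,b)=-\log(2-e^{a+b})$. This yields an explicit constant $C_\ast=\partial_a\partial_bF(r_\ast,r_\ast)$. It is valid because $e^{2r_\ast}-1\le 1/2$ keeps $(r_\ast,r_\ast)$ strictly inside the convergence region $a+b<\log 2$. You also verify that the series is the principal logarithm and that $\Phi$ takes values in $\so(n)$, which the paper takes for granted.

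The paper's route is shorter and more modular. Yours is self-contained and makes the dimension independence transparent.
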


\begin{proof}
Choose $r_\ast>0$ small enough that
\[
e^{2r_\ast}-1\le \frac12.
\]
If $\norm{\xi}_{\HS},\norm{h}_{\HS}\le r_\ast$, then $\norm{\xi}_{\op},\norm{h}_{\op}\le r_\ast$, and therefore
\begin{align*}
\norm{\exp(\xi)\exp(h)-\Id}_{\op}
&\le
\norm{\exp(\xi)}_{\op}\,\norm{\exp(h)-\Id}_{\op}+\norm{\exp(\xi)-\Id}_{\op} \\
&\le e^{\norm{\xi}_{\op}}\bigl(e^{\norm{h}_{\op}}-1\bigr)+\bigl(e^{\norm{\xi}_{\op}}-1\bigr) \\
& \le e^{2r_\ast}-1 \le \frac12.
\end{align*}
Hence $\exp(\xi)\exp(h)$ lies in the fixed operator-norm neighborhood
\[
\mathcal U=\{M\in M_n(\R): \norm{M-\Id}_{\op}\le 1/2\},
\]
on which the principal logarithm is analytic; see \cite[Chapter~1]{high:FM}. The matrix exponential, matrix multiplication, and principal logarithm all have first and second Fr\'echet derivatives bounded by absolute constants on these fixed neighborhoods (with respect to the Hilbert--Schmidt norm). Consequently $\Phi$ is smooth on the stated product ball, and its mixed second derivatives are bounded there by an absolute constant.

Define $\beta(\xi,h)=\Phi(\xi,h)-\xi-h$.
Because $\Phi(\xi,0)=\xi$ and $\Phi(0,h)=h$, we have
\[
\beta(\xi,0)=0,
\qquad
\beta(0,h)=0.
\]
Therefore
\[
\beta(\xi,h)
=
\int_0^1\!\int_0^1 \partial_1\partial_2\beta(s\xi,th)[\xi,h]\,\dd s\,\dd t,
\]
so the fact that the mixed second-derivatives are bounded gives, for some $C_{\ast}$, the inequality
\[
\norm{\beta_h(\xi)}_{\HS}\le C_\ast \norm{\xi}_{\HS}\norm{h}_{\HS}.
\]
Also $\dd_\xi\beta(\xi,0)=0$ for all $\xi$, hence
\[
\dd_\xi \beta_h(\xi)
=
\int_0^1 \partial_2\dd_\xi\beta(\xi,th)[h]\,\dd t,
\]
and the same bound yields
\[
\norm{\dd_\xi \beta_h(\xi)}_{\op}\le C_\ast \norm{h}_{\HS}
\]
and the proof is finished.
\end{proof}
For $g\in \SO(n)$, write
\[
T_g(x)=xg.
\]

The next proposition relates translations in the tangent space $\so(n)$ to translation in the group $\SO(n)$: if a law on $\SO(n)$ is the exponential pushforward of a measure on
$\so(n)$ that is close to a centered Gaussian, then small right translation on
the group has small total-variation cost.

\begin{proposition}[Gaussian overlap transfer on $\SO(n)$]
\label{prop:gaussian-bch}
Let $\mu$ be a probability measure on $\so(n)$ supported on
\begin{equation} \label{SupportHyp1}
\{\xi\in \so(n): \norm{\xi}_{\HS}\le r_\ast/4\},
\end{equation}
 where $r_\ast$ is the radius from Lemma~\ref{lem:bch-remainder},
and let $\nu=\exp_\#\mu.$
Let $\gamma=\mathcal N(0,\Sigma)$
be a centered Gaussian law on $\so(n)$, where $\Sigma$ is positive definite, and
assume
\begin{equation}
\label{eq:gaussian-transfer-chart}
\norm{\Sigma^{1/2}}_{\op}\le \frac{r_\ast}{16\sqrt N}.
\end{equation}
Set $\kappa(\Sigma)=\frac{\lambda_{\max}(\Sigma)}{\lambda_{\min}(\Sigma)}$.
There exist absolute constants $c_{\mathrm{GT}},c_B,C_B>0$ with the following
property: for every $h\in \so(n)$ satisfying
\begin{equation}
\label{eq:gaussian-transfer-h-small}
\norm{h}_{\HS}\le c_{\mathrm{GT}}\,\kappa(\Sigma)^{-1/2},
\end{equation}
and with $g=\exp(h)$, 
\begin{align}
\norm{\nu-(T_g)_\#\nu}_{\TV}
\le
&2\norm{\mu-\gamma}_{\TV}
+
C_B\norm{\Sigma^{-1/2}h}_{\HS} \notag \\
&+
C_B\sqrt N\,\kappa(\Sigma)^{1/2}\norm{h}_{\HS}
+
e^{-c_BN}.\label{eq:gaussian-bch}
\end{align}
\end{proposition}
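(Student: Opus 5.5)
Since $\nu=\exp_\#\mu$, the translated law is $(T_g)_\#\nu=\exp_\#\bigl((\Phi_h)_\#\mu\bigr)$, where $\Phi_h(\xi)=\log(\exp(\xi)\exp(h))=\xi+h+\beta_h(\xi)$ is the map of Lemma~\ref{lem:bch-remainder}. This identity holds on the support \eqref{SupportHyp1}, because there $\norm{\xi}_{\HS}\le r_\ast/4$ and $\norm{h}_{\HS}\le r_\ast$ after shrinking $c_{\mathrm{GT}}$. Pushforward by $\exp$ does not increase total variation, so
\[
\norm{\nu-(T_g)_\#\nu}_{\TV}\le \norm{\mu-(\Phi_h)_\#\mu}_{\TV}.
\]
Pushforward by $\Phi_h$ is also a contraction, so the triangle inequality gives
\[
\norm{\mu-(\Phi_h)_\#\mu}_{\TV}\le 2\norm{\mu-\gamma}_{\TV}+\norm{\gamma-(\Phi_h)_\#\gamma}_{\TV},
\]
once $\Phi_h$ is extended to a globally defined map. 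The proof therefore reduces to a Gaussian estimate for the pure BCH transport.

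To globalize, pick a cutoff $\chi=\Psi_{N,R}$ from Lemma~\ref{lem:radial-cutoff} with $R=r_\ast/12$. Then $\chi\equiv1$ on $\norm{\xi}\le r_\ast/6\supseteq\mathrm{supp}\,\mu$ and $\chi\equiv 0$ for $\norm{\xi}\ge r_\ast/4$. Define $\widetilde\Phi_h(\xi)=\xi+h+\chi(\xi)\beta_h(\xi)$, which agrees with $\Phi_h$ on $\mathrm{supp}\,\mu$. By \eqref{eq:bch-remainder} and $\norm{\nabla\chi}\lesssim 1/r_\ast$, we have $\norm{\chi\beta_h}\le C\norm{h}_{\HS}$ and $\norm{\dd(\chi\beta_h)}_{\op}\le C\norm{h}_{\HS}$ uniformly. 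Next split
\[
\norm{\gamma-(\widetilde\Phi_h)_\#\gamma}_{\TV}\le \norm{\gamma-\Law(G+h)}_{\TV}+\norm{\Law(G+h)-(\widetilde\Phi_h)_\#\gamma}_{\TV}.
\]
Lemma~\ref{lem:gaussian-shift} bounds the first term by $\tfrac{1}{\sqrt2}\norm{\Sigma^{-1/2}h}$.

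For the second term, write $G=\Sigma^{1/2}X$ with $X\sim\gamma_N$ and conjugate by $\Sigma^{1/2}$. The map becomes $x\mapsto x+\Sigma^{-1/2}h+\Sigma^{-1/2}\chi\beta_h(\Sigma^{1/2}x)$. Removing the common translation $\Sigma^{-1/2}h$ costs nothing, but the remainder $R(x)=\Sigma^{-1/2}\chi\beta_h(\Sigma^{1/2}x)$ must then be compared with a Gaussian translated by $\Sigma^{-1/2}h$. It is cleaner to apply Proposition~\ref{prop:near-identity} to $T=\Id+R$ and then use Lemma~\ref{lem:gaussian-shift} once more for the shift. Since
\[
\norm{\gamma_N\circ\tau_a-T_\#\gamma_N\circ\tau_a}_{\TV}=\norm{\gamma_N-T_\#\gamma_N}_{\TV},
\]
we get
\[
\norm{\Law(G+h)-(\widetilde\Phi_h)_\#\gamma}_{\TV}\le C_{\mathrm{NI}}(\alpha+\sqrt N\beta).
\]
The constants come from the uniform bounds on $R$. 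For $\beta$, we have $\norm{\dd R}_{\op}\le \norm{\Sigma^{-1/2}}\norm{\Sigma^{1/2}}\,C\norm{h}=C\kappa^{1/2}\norm{h}$, and the hypothesis \eqref{eq:gaussian-transfer-h-small} with small $c_{\mathrm{GT}}$ gives $\beta\le1/2$. Then $\sqrt N\beta$ is exactly the third term of \eqref{eq:gaussian-bch}.

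The main obstacle is $\alpha=\sup\norm{R}$. The crude bound $\lambda_{\min}(\Sigma)^{-1/2}C\norm{h}\,r_\ast$ is too large, so the sup bound must be avoided. I would rework the KL computation in the proof of Proposition~\ref{prop:near-identity} so that only $\E\norm{R(X)}^2$ enters, which that proof actually permits. Bound it by
\[
\E\norm{R(X)}^2\le\lambda_{\min}^{-1}C_\ast^2\norm{h}^2\,\E\norm{\Sigma^{1/2}X}^2\le C\kappa\, N\norm{h}^2,
\]
using \eqref{eq:bch-remainder} and $\E\norm{\Sigma^{1/2}X}^2=\Tr\Sigma\le N\lambda_{\max}$. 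This again yields $\sqrt N\kappa^{1/2}\norm{h}$.

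The $e^{-c_BN}$ term absorbs two Gaussian tail events. One is where the cutoff is active: by \eqref{eq:gaussian-transfer-chart}, $\norm{G}\le r_\ast/16$ except with probability $e^{-cN}$. The other arises if one instead localizes with a hard truncation. Collecting the four contributions gives \eqref{eq:gaussian-bch}.
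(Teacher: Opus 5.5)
Your proof is correct after one small fix, and it differs from the paper's in how the BCH remainder is globalized.

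\textbf{The fix.} The support hypothesis only gives $\mathrm{supp}\,\mu\subseteq\{\norm{\xi}_{\HS}\le r_\ast/4\}$. So the claim $\{\norm{\xi}_{\HS}\le r_\ast/6\}\supseteq\mathrm{supp}\,\mu$ is false, and with $R=r_\ast/12$ your $\widetilde\Phi_h$ need not equal $\Phi_h$ on $\mathrm{supp}\,\mu$. Take $R=r_\ast/8$ instead. Then $\chi\equiv1$ on the $r_\ast/4$-ball and $\chi$ vanishes outside the $3r_\ast/8$-ball, which is still inside the domain of Lemma~\ref{lem:bch-remainder}. Your derivative bound is unaffected, because $\norm{\nabla\chi}_2\,\norm{\beta_h}_{\HS}\le (C_{\mathrm{cut}}/R)\cdot 3C_\ast R\,\norm{h}_{\HS}$ does not depend on the scale $R$.

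\textbf{Comparison with the paper.} The paper places its cutoff in whitened coordinates at the Gaussian's typical radius, using $\Psi_{N,\sqrt N}$ in $x=\Sigma^{-1/2}\xi$.
\begin{itemize}
\item This gives $\sup_x\norm{R_h(x)}_2\le 3C_\ast\sqrt{\kappa N}\,\norm{h}_{\HS}$ directly, so Proposition~\ref{prop:near-identity} can be used exactly as stated.
\item The price is that the cut-off map agrees with $B_h$ only on $F=\{\norm{X}_2\le 2\sqrt N\}$; this is where the $e^{-c_BN}$ term comes from.
\item The chart hypothesis \eqref{eq:gaussian-transfer-chart} is also needed, so that $\Sigma^{1/2}$ maps the whitened $4\sqrt N$-ball into the BCH chart.
\end{itemize}
You instead cut off at a fixed radius in $\so(n)$.
\begin{itemize}
\item Your $\widetilde\Phi_h$ is then itself a valid global extension of $\Phi_h$ from $\mathrm{supp}\,\mu$. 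It can be used on both sides of the triangle inequality, and no comparison with the uncut map under $\gamma$ is ever needed.
\item The cost is that $\sup\norm{R}_2$ can be as large as $\lambda_{\min}^{-1/2}\norm{h}_{\HS}$. You repair this correctly with the $L^2$ form $\KL(T_\#\gamma_N\,\|\,\gamma_N)\le\frac12\E\norm{R(X)}_2^2+C_{\log}N\beta^2$.
\item The paper's proof of Proposition~\ref{prop:near-identity} gives exactly this form: $\alpha$ enters only through $\frac12\E\norm{R(X)}_2^2$, and Gaussian integration by parts is harmless for bounded $R$.
\end{itemize}
Your route therefore proves a slightly stronger statement, with no $e^{-c_BN}$ term and no use of \eqref{eq:gaussian-transfer-chart}. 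The cost is reopening that proposition, whereas the paper keeps it as a black box.

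Consequently your final paragraph on tail events is unnecessary; the nonnegative term $e^{-c_BN}$ can simply be added at the end. Its claim is also off by a factor: $\norm{G}_{\HS}\le r_\ast/16$ does not hold off an $e^{-cN}$ event, since $\norm{X}_2\le\sqrt N$ has probability only about $1/2$.
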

\begin{proof}
Let \(r_\ast\) and \(C_\ast\) be the constants from
Lemma~\ref{lem:bch-remainder}. Define
\[
c_{\mathrm{GT}}
:=
\min\left\{
r_\ast,\,
\frac{1}{2C_\ast(1+3C_{\mathrm{cut}})}
\right\}.
\]
Since \(\kappa(\Sigma)\ge 1\), the assumption
\[
\|h\|_{\HS}\le c_{\mathrm{GT}}\kappa(\Sigma)^{-1/2}
\]
implies
\[
\|h\|_{\HS}\le c_{\mathrm{GT}}\le r_\ast.
\]

Choose a measurable extension
\[
B_h:\so(n)\to\so(n)
\]
of the local map
\[
\xi\mapsto \log(\exp(\xi)\exp(h))
\]
from the ball \(\{\|\xi\|_{\HS}\le r_\ast/4\}\) to all of \(\so(n)\).
Since \(\mu\) is supported in this ball, we have, for \(\xi\in \mathrm{supp}\,\mu\),
\[
B_h(\xi)=\log(\exp(\xi)\exp(h)).
\]
Thus
\[
T_g(\exp(\xi))
=
\exp(\xi)\exp(h)
=
\exp(B_h(\xi)),
\]
and hence
\[
\nu=\exp_\#\mu,
\qquad
(T_g)_\#\nu=\exp_\#(B_h)_\#\mu.
\]
Since total variation cannot increase under pushforward,
\[
\|\nu-(T_g)_\#\nu\|_{\TV}
\le
\|\mu-(B_h)_\#\mu\|_{\TV}.
\]
Inserting \(\gamma\) and \((B_h)_\#\gamma\), and then using contraction of
total variation under the pushforward by the measurable map \(B_h\), gives
\begin{align}
\|\mu-(B_h)_\#\mu\|_{\TV}
&\le
\|\mu-\gamma\|_{\TV}
+
\|\gamma-(B_h)_\#\gamma\|_{\TV}
+
\|(B_h)_\#\gamma-(B_h)_\#\mu\|_{\TV}
\notag\\
&\le
\|\mu-\gamma\|_{\TV}
+
\|\gamma-(B_h)_\#\gamma\|_{\TV}
+
\|\gamma-\mu\|_{\TV}
\notag\\
&=
2\|\mu-\gamma\|_{\TV}
+
\|\gamma-(B_h)_\#\gamma\|_{\TV}.
\label{eq:transfer-triangle}
\end{align}

It remains to control the second term. We split it into a mean shift and a nonlinear Baker-Campbell-Hausdorff correction; the latter is localized and whitened so that Proposition~\ref{prop:near-identity} can be applied globally. Let
\[
\tau_h(\xi)=\xi+h.
\]
We have
\begin{equation}
\label{eq:transfer-split}
\|\gamma-(B_h)_\#\gamma\|_{\TV}
\le
\|\gamma-(\tau_h)_\#\gamma\|_{\TV}
+
\|(\tau_h)_\#\gamma-(B_h)_\#\gamma\|_{\TV}.
\end{equation}
By Lemma~\ref{lem:gaussian-shift},
\begin{equation}
\label{eq:transfer-gaussian-shift}
\|\gamma-(\tau_h)_\#\gamma\|_{\TV}
\le
\frac1{\sqrt2}\|\Sigma^{-1/2}h\|_{\HS}.
\end{equation}
This controls the first term in \eqref{eq:transfer-split}. We now bound the nonlinear term. Set
\[
\kappa:=\kappa(\Sigma)
=
\frac{\lambda_{\max}(\Sigma)}{\lambda_{\min}(\Sigma)}.
\]
Write
\[
\Xi=\Sigma^{1/2}X,
\qquad
X\sim\mathcal N(0,I_N),
\]
so that \(\Xi\sim\gamma\). Let
\[
F=\{\|X\|_2\le 2\sqrt N\}.
\]
A standard Chernoff bound gives
\begin{equation}
\label{eq:transfer-X-tail}
\Prob(F^c)\le e^{-2c_BN},
\end{equation}
where
\[
c_B:=\frac{3-\log 4}{4}.
\]
Using the construction and notation from the statement of Lemma~\ref{lem:radial-cutoff}, choose
\[
\Psi=\Psi_{N,\sqrt N}.
\]
Then
\[
\Psi(x)=1 \quad\text{if }\|x\|_2\le 2\sqrt N,
\qquad
\Psi(x)=0 \quad\text{if }\|x\|_2\ge 3\sqrt N,
\]
and
\begin{equation}
\label{eq:transfer-cutoff-grad}
\sup_{x\in\R^N}\|\nabla\Psi(x)\|_2
\le
C_{\mathrm{cut}}N^{-1/2}.
\end{equation}
Recall that $r_\ast$ is the radius from Lemma~\ref{lem:bch-remainder}. We first check that the whitened variable stays in this logarithmic chart on the ball that contains the support of the cutoff.
By the chart assumption \eqref{eq:gaussian-transfer-chart}, if
\(\|x\|_2\le 4\sqrt N\), then
\[
\|\Sigma^{1/2}x\|_{\HS}
\le
\|\Sigma^{1/2}\|_{\op}\|x\|_2
\le
\frac{r_\ast}{4}.
\]
Thus, on this ball, the map
\[
x\mapsto \log\!\bigl(\exp(\Sigma^{1/2}x)\exp(h)\bigr)
\]
is well-defined and smooth.

We next write the Baker-Campbell-Hausdorff remainder in whitened coordinates and then use the cutoff function to obtain the global bounds required by Proposition~\ref{prop:near-identity}.
Define the local whitened error
\[
b_h(x):=\Sigma^{-1/2}\beta_h(\Sigma^{1/2}x),
\qquad \|x\|_2\le 4\sqrt N,
\]
where
\[
\beta_h(\xi)=\log(\exp(\xi)\exp(h))-\xi-h
\]
is the local remainder from Lemma~\ref{lem:bch-remainder}. 
By the chain rule,
\[
\dd b_h(x)
=
\Sigma^{-1/2}\circ
\dd_\xi\beta_h(\Sigma^{1/2}x)
\circ \Sigma^{1/2}.
\]
Since
\[
\|\Sigma^{-1/2}\|_{\op}\|\Sigma^{1/2}\|_{\op}
=
\sqrt{\kappa},
\]
Lemma~\ref{lem:bch-remainder} gives, for \(\|x\|_2\le 4\sqrt N\),
\begin{equation}
\label{eq:transfer-b-bound}
\|b_h(x)\|_2
\le
C_\ast\sqrt{\kappa}\,\|x\|_2\|h\|_{\HS},
\end{equation}
and
\begin{equation}
\label{eq:transfer-db-bound}
\|\dd b_h(x)\|_{\op}
\le
C_\ast\sqrt{\kappa}\,\|h\|_{\HS}.
\end{equation}

Define the global cutoff error
\[
R_h(x):=
\begin{cases}
\Psi(x)b_h(x), & \|x\|_2\le 4\sqrt N,\\
0, & \|x\|_2>4\sqrt N,
\end{cases}
\qquad
S_h(x):=x+R_h(x).
\]
$S_{h}$ is well-defined and \(C^1\), because \(\Psi(x)=0\) when
\(\|x\|_2\ge 3\sqrt N\). Since \(0\le \Psi\le 1\) and \(\Psi\) is supported in
\(\{\|x\|_2\le 3\sqrt N\}\), \eqref{eq:transfer-b-bound} gives
\begin{equation}
\label{eq:transfer-R-bound}
\sup_{x\in\R^N}\|R_h(x)\|_2
\le
3C_\ast\sqrt{\kappa N}\,\|h\|_{\HS}.
\end{equation}

We next bound the derivative. On the support of \(\Psi\),
\[
\dd R_h(x)
=
\Psi(x)\dd b_h(x)+(\nabla\Psi(x))\otimes b_h(x),
\]
and \(\dd R_h(x)=0\) outside this support. The first term is controlled by
\eqref{eq:transfer-db-bound}:
\[
\|\Psi(x)\dd b_h(x)\|_{\op}
\le
C_\ast\sqrt{\kappa}\,\|h\|_{\HS}.
\]
For the second term, on the support of \(\nabla\Psi\) we have
\(\|x\|_2\le 3\sqrt N\). Therefore \eqref{eq:transfer-cutoff-grad} and
\eqref{eq:transfer-b-bound} imply
\[
\|(\nabla\Psi(x))\otimes b_h(x)\|_{\op}
\le
C_{\mathrm{cut}}N^{-1/2}
\cdot
3C_\ast\sqrt{\kappa N}\,\|h\|_{\HS}.
\]
Combining the two estimates gives
\begin{equation}
\label{eq:transfer-DR-bound}
\sup_{x\in\R^N}\|\dd R_h(x)\|_{\op}
\le
C_\ast(1+3C_{\mathrm{cut}})\sqrt{\kappa}\,\|h\|_{\HS}.
\end{equation}

By the definition of \(c_{\mathrm{GT}}\), the assumption on \(h\) implies
\[
\sup_{x\in\R^N}\|\dd R_h(x)\|_{\op}\le \frac12.
\]
Therefore Proposition~\ref{prop:near-identity} applies to
\[
S_h=\Id+R_h.
\]
With
\[
\alpha=3C_\ast\sqrt{\kappa N}\,\|h\|_{\HS},
\qquad
\beta=C_\ast(1+3C_{\mathrm{cut}})\sqrt{\kappa}\,\|h\|_{\HS},
\]
it yields
\begin{equation}
\label{eq:transfer-Sh-close}
\|(S_h)_\#\mathcal N(0,I_N)-\mathcal N(0,I_N)\|_{\TV}
\le
C_{\mathrm{NI}}C_\ast(4+3C_{\mathrm{cut}})
\sqrt{\kappa N}\,\|h\|_{\HS}.
\end{equation}

We have now controlled the whitened cutoff map \(S_h=\Id+R_h\). It remains to
compare this cutoff model with the original map \(B_h\). Define
\[
\widetilde B_h(x)=\Sigma^{-1/2}B_h(\Sigma^{1/2}x),
\qquad
u=\Sigma^{-1/2}h.
\]
On the event \(F\), the cutoff is inactive: \(\Psi(X)=1\). Moreover,
\(\|\Sigma^{1/2}X\|_{\HS}\le r_\ast/4\), so \(B_h\) agrees with the local
logarithmic map at \(\Sigma^{1/2}X\). Hence, on \(F\),
\[
B_h(\Sigma^{1/2}X)
=
\log\!\bigl(\exp(\Sigma^{1/2}X)\exp(h)\bigr)
=
\Sigma^{1/2}X+h+\beta_h(\Sigma^{1/2}X),
\]
and therefore
\[
\widetilde B_h(X)
=
X+u+b_h(X)
=
\tau_u(S_h(X)),
\qquad
\tau_u(x)=x+u.
\]
Thus the coupling inequality gives
\[
\|\Law(\widetilde B_h(X))-\Law(\tau_u(S_h(X)))\|_{\TV}
\le
\Prob(F^c).
\]
By the triangle inequality and translation invariance of total variation,
\begin{align*}
\|\Law(\widetilde B_h(X))-\Law(\tau_u(X))\|_{\TV}
&\le
\Prob(F^c)
+
\|\Law(\tau_u(S_h(X)))-\Law(\tau_u(X))\|_{\TV}\\
&=
\Prob(F^c)
+
\|(S_h)_\#\mathcal N(0,I_N)-\mathcal N(0,I_N)\|_{\TV}.
\end{align*}
Using \eqref{eq:transfer-X-tail} in the form
\(\Prob(F^c)\le e^{-2c_BN}\), together with
\eqref{eq:transfer-Sh-close}, gives
\begin{align}
\|(\widetilde B_h)_\#\mathcal N(0,I_N)
-(\tau_u)_\#\mathcal N(0,I_N)\|_{\TV}
&\le
C_{\mathrm{NI}}C_\ast(4+3C_{\mathrm{cut}})
\sqrt{\kappa N}\,\|h\|_{\HS}
+
e^{-2c_BN}.
\label{eq:transfer-whitened-nonlinear}
\end{align}
Finally, applying the invertible linear map \(\Sigma^{1/2}\) to both measures and
using
\[
B_h\circ \Sigma^{1/2}=\Sigma^{1/2}\circ \widetilde B_h,
\qquad
\tau_h\circ \Sigma^{1/2}=\Sigma^{1/2}\circ \tau_u,
\]
turns \eqref{eq:transfer-whitened-nonlinear} into
\begin{equation}
\label{eq:transfer-nonlinear}
\|(B_h)_\#\gamma-(\tau_h)_\#\gamma\|_{\TV}
\le
C_{\mathrm{NI}}C_\ast(4+3C_{\mathrm{cut}})
\sqrt{\kappa N}\,\|h\|_{\HS}
+
e^{-2c_BN}.
\end{equation}

Finally set
\[
C_B:=
\max\left\{
\frac1{\sqrt2},
C_{\mathrm{NI}}C_\ast(4+3C_{\mathrm{cut}})
\right\}.
\]
Combining \eqref{eq:transfer-triangle}, \eqref{eq:transfer-split},
\eqref{eq:transfer-gaussian-shift}, and \eqref{eq:transfer-nonlinear}, we get
\[
\|\nu-(T_g)_\#\nu\|_{\TV}
\le
2\|\mu-\gamma\|_{\TV}
+
C_B\|\Sigma^{-1/2}h\|_{\HS}
+
C_B\sqrt{\kappa N}\,\|h\|_{\HS}
+
e^{-c_BN}.
\]
Since \(\kappa=\kappa(\Sigma)\), this is exactly \eqref{eq:gaussian-bch} and the proof is finished.
\end{proof}

Denote the ``block law" by
\begin{equation}
\label{eq:nuA-def}
\nu_A=\Law(H_A(\Delta)\mid A).
\end{equation}
We now apply Proposition \ref{prop:gaussian-bch} to compare $\nu_A$ and $(T_g)_\#\nu_A$.

\begin{proposition}
\label{prop:block-translate}
Let $c_{\log}$ be the constant from Lemma \ref{lem:local-log}. Let $h\in\so(n)$ and let $g=\exp(h)$.  If $\norm{h}_{\HS}\le c_{\log}\omega_n$, then
for all sufficiently large $n$ and for all $A\in \mathcal G_A$,
\begin{equation} \label{eqn:tvo(1)bd}
\norm{\nu_A-(T_g)_\#\nu_A}_{\TV}=o(1).
\end{equation}
\end{proposition}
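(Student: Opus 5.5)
The plan is to apply Proposition~\ref{prop:gaussian-bch} with $\Sigma=\Sigma_A=\sigma_n^2M_A$. The one wrinkle is that $\nu_A$ itself is not the exponential pushforward of a compactly supported measure: $U_A$ is only defined, and only small, on the event $E$. So I will work with the conditioned law $\nu_A^E:=\Law(H_A(\Delta)\mid A;E)$ and then pay for the truncation. Since $\nu_A=\Prob(E)\nu_A^E+\Prob(E^c)\eta$ for some probability measure $\eta$, and $T_g$ is a bijection,
\[
\norm{\nu_A-(T_g)_\#\nu_A}_{\TV}\le \norm{\nu_A^E-(T_g)_\#\nu_A^E}_{\TV}+2\Prob(E^c),
\]
and \eqref{eq:chi-tail-clean} gives $\Prob(E^c)\le e^{-c_{\mathrm{tail}}m}=o(1)$. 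On $E$ we have $H_A(\Delta)=\exp(U_A)$, so $\nu_A^E=\exp_\#\mu$ with $\mu:=\Law(U_A\mid A;E)$.

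Next I check the hypotheses of Proposition~\ref{prop:gaussian-bch} for $A\in\mathcal G_A$.
\begin{itemize}
\item \emph{Support of $\mu$.} On $E$, Proposition~\ref{prop:quadratic} gives $\norm{U_A}_{\HS}\le\norm{J_A}_{\op}\norm{\Delta}_2+C_0m\norm{\Delta}_2^2$. Here $\norm{J_A}_{\op}\le\sqrt{3m/N}$ on $\mathcal G_A$ and $\norm{\Delta}_2\le 2\sigma_n\sqrt m$. This yields $\norm{U_A}_{\HS}\lesssim \sigma_n m/\sqrt N+\sigma_n^2m^2=N^{-5/2}\,\mathrm{polylog}\to 0$, which is below $r_\ast/4$ for large $n$.
\item \emph{Chart condition \eqref{eq:gaussian-transfer-chart}.} We have $\norm{\Sigma_A^{1/2}}_{\op}\le\sigma_n\sqrt{3m/N}=O(N^{-3}\sqrt{\log N})$, which is far smaller than $r_\ast/(16\sqrt N)$.
\item \emph{Condition number.} $\kappa(\Sigma_A)\le 3$.
\item \emph{Smallness \eqref{eq:gaussian-transfer-h-small}.} $\norm{h}_{\HS}\le c_{\log}N^{-4}$, so the condition holds for large $n$.
\end{itemize}

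Proposition~\ref{prop:gaussian-bch} then bounds $\norm{\nu_A^E-(T_g)_\#\nu_A^E}_{\TV}$ by four terms, each of which I expect to be $o(1)$:
\begin{itemize}
\item $2\norm{\mu-\gamma_A}_{\TV}$, which is at most $2C_{\mathrm{GA}}\sigma_n m^2/\sqrt{m/N}+2e^{-c_{\mathrm{GA}}m}$ by Proposition~\ref{prop:gaussian-approx}. Since $\sigma_nm^{3/2}\sqrt N=O(N^{-1}\log^{3/2}N)$, this is $o(1)$.
\item $C_B\norm{\Sigma_A^{-1/2}h}_{\HS}\le C_B\,\sigma_n^{-1}\sqrt{N/m}\,c_{\log}\omega_n=O(N^{-1}/\sqrt{\log N})$.
\item $C_B\sqrt{3N}\,\norm{h}_{\HS}=O(N^{-7/2})$.
\item $e^{-c_BN}$.
\end{itemize}
All of these bounds are uniform over $A\in\mathcal G_A$ and over $\norm{h}_{\HS}\le c_{\log}\omega_n$.

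I expect the main obstacle to be the first step: handling the fact that Proposition~\ref{prop:gaussian-bch} requires compact support, which forces the conditioning on $E$ and the extra $2\Prob(E^c)$ term. One also has to make sure that $\mu$ is the \emph{conditioned} law, as in Proposition~\ref{prop:gaussian-approx}, so that the support hypothesis genuinely holds. The remaining steps are scale bookkeeping using \eqref{eq:scales}.
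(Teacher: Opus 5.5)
Your proposal is correct and follows essentially the same route as the paper. Like the paper, you condition on $E$ at a cost of $2\Prob(E^c)$, verify the support, chart-size and smallness hypotheses of Proposition~\ref{prop:gaussian-bch} on $\mathcal G_A$ (using $\norm{J_A}_{\op}\le\sqrt{3m/N}$ and $\kappa(\Sigma_A)\le 3$), and then insert Proposition~\ref{prop:gaussian-approx} and check that every term is $o(1)$ under \eqref{eq:scales}; your scale bookkeeping matches, the only implicit step being $2\sigma_n\sqrt m\le c_0/\sqrt m$ so that Proposition~\ref{prop:quadratic} applies on $E$.
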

\begin{proof}
Fix a realization $A\in \mathcal G_A$. All probabilities and conditional laws
below are taken with this $A$ fixed.

Recall that
\[
E=\{\|\Delta\|_2\le 2\sigma_n\sqrt m\}
\]
is the typical-size event for the Gaussian perturbation, and write
\[
\mu:=\mu_A^E=\Law(U_A\mid A;E),
\qquad
\nu:=\nu_A^E=\Law(H_A(\Delta)\mid A;E).
\]
Also recall that
\[
\gamma:=\gamma_A=\mathcal N(0,\Sigma_A),
\qquad
\Sigma_A:=\sigma_n^2M_A.
\]
Since $E$ depends only on $\Delta$, which is independent of $A$, we may write
\[
\nu_A=\Prob(E)\nu+\Prob(E^c)\nu_A^{E^c}
\]
for some probability measure $\nu_A^{E^c}$. Hence
\begin{equation}
\label{eq:block-decomposition}
\|\nu_A-(T_g)_\#\nu_A\|_{\TV}
\le
\|\nu-(T_g)_\#\nu\|_{\TV}
+
2\Prob(E^c).
\end{equation}
We now wish to apply Proposition~\ref{prop:gaussian-bch} with
\[
\mu=\mu_A^E,
\qquad
\nu=\nu_A^E,
\qquad
\gamma=\gamma_A,
\qquad
\Sigma=\Sigma_A.
\]
To do so, we verify the three assumptions of Proposition~\ref{prop:gaussian-bch}: $\mu$ is supported in the logarithmic chart, the chart-size bound on $\Sigma_A$, and the smallness condition on $h$.

\smallskip
\noindent
\emph{(i) Support in the logarithmic chart.}
Let $r_\ast$ be the chart radius from Proposition~\ref{prop:gaussian-bch}, and let
$C_0$ be the constant from Proposition~\ref{prop:quadratic}. Proposition~\ref{prop:gaussian-bch} requires that $\mu$ be supported in a set of the form \eqref{SupportHyp1}. We now check that this ``support" hypothesis holds.

On $E$,
\[
\|\Delta\|_2\le 2\sigma_n\sqrt m.
\]
Under the scale choice \eqref{eq:scales},
\[
2\sigma_n\sqrt m=o(m^{-1/2}),
\]
so for all sufficiently large $n$ the radius condition in
Proposition~\ref{prop:quadratic} holds on $E$. Therefore
\[
U_A=J_A\Delta+r_A(\Delta).
\]
On $\mathcal G_A$,
\[
\|J_A\|_{\op}
=
\sqrt{\lambda_{\max}(M_A)}
\le
\sqrt{3m/N}.
\]
Hence, on $E$,
\[
\|U_A\|_{\HS}
\le
\|J_A\|_{\op}\|\Delta\|_2+\|r_A(\Delta)\|_{\HS}
\le
2\sqrt3\,\sigma_n\frac{m}{\sqrt N}
+
4C_0\sigma_n^2m^2.
\]
The right-hand side is $o(1)$ under \eqref{eq:scales}. Since $r_\ast>0$ is fixed,
it follows that for all sufficiently large $n$,
\[
\|U_A\|_{\HS}\le \frac{r_\ast}{4}
\qquad\text{on }E.
\]
Thus $\mu$ is supported in
\[
\{\xi\in\so(n):\|\xi\|_{\HS}\le r_\ast/4\}.
\]
Moreover, on $E$ we have $H_A(\Delta)=\exp(U_A)$, and therefore
\[
\nu=\exp_\#\mu.
\]
This verifies the ``support" hypothesis of Proposition~\ref{prop:gaussian-bch}.

\smallskip
\noindent
\emph{(ii) Chart-size condition for the Gaussian covariance.}
On $\mathcal G_A$,
\[
\frac{m}{N}I_N\preceq M_A\preceq 3\frac{m}{N}I_N,
\]
so
\[
\sigma_n^2\frac{m}{N}I_N
\preceq
\Sigma_A
\preceq
3\sigma_n^2\frac{m}{N}I_N.
\]
In particular,
\[
\|\Sigma_A^{1/2}\|_{\op}
\le
\sqrt3\,\sigma_n\sqrt{m/N}.
\]
Therefore
\[
16\sqrt N\,\|\Sigma_A^{1/2}\|_{\op}
\le
16\sqrt3\,\sigma_n\sqrt m
=o(1),
\]
so for all sufficiently large $n$,
\[
\|\Sigma_A^{1/2}\|_{\op}\le \frac{r_\ast}{16\sqrt N}.
\]
Thus the chart-size hypothesis \eqref{eq:gaussian-transfer-chart} of
Proposition~\ref{prop:gaussian-bch} holds.

\smallskip
\noindent
\emph{(iii) Smallness condition on $h$.}
Let $c_{\mathrm{GT}},C_B,c_B$ be the constants from
Proposition~\ref{prop:gaussian-bch}. On $\mathcal G_A$,
\[
\kappa(\Sigma_A)
=
\frac{\lambda_{\max}(\Sigma_A)}{\lambda_{\min}(\Sigma_A)}
\le 3.
\]
Since $\kappa(\Sigma_A)\le 3$,
the smallness hypothesis \eqref{eq:gaussian-transfer-h-small} from
Proposition~\ref{prop:gaussian-bch},
\[
\|h\|_{\HS}\le c_{\mathrm{GT}}\kappa(\Sigma_A)^{-1/2},
\]
is implied by
\[
\|h\|_{\HS}\le \frac{c_{\mathrm{GT}}}{\sqrt3}.
\]
But by assumption,
\[
\|h\|_{\HS}\le c_{\log}\omega_n=c_{\log}N^{-4}\to 0,
\]
so the bound $|h|_{\HS}\le c_{\mathrm{GT}}/\sqrt3$ holds for all sufficiently large $n$. Therefore the smallness hypothesis of Proposition~\ref{prop:gaussian-bch} is satisfied.

Having verified the hypotheses, we may apply
Proposition~\ref{prop:gaussian-bch} with
\[
\mu=\mu_A^E,\qquad
\nu=\nu_A^E,\qquad
\gamma=\gamma_A,\qquad
\Sigma=\Sigma_A.
\]
This yields
\begin{align*}
\|\nu_A^E-(T_g)_\#\nu_A^E\|_{\TV}
\le
2\|\mu_A^E-\gamma_A\|_{\TV}
&+
C_B\|\Sigma_A^{-1/2}h\|_{\HS} \\
&+
C_B\sqrt N\,\kappa(\Sigma_A)^{1/2}\|h\|_{\HS}
+
e^{-c_BN}.
\end{align*}
Using again that on \(\mathcal G_A\),
\[
\kappa(\Sigma_A)\le 3,
\qquad
\|\Sigma_A^{-1/2}h\|_{\HS}
\le
\frac{\|h\|_{\HS}}{\sigma_n\sqrt{m/N}},
\]
we obtain
\begin{equation}
\label{eq:block-transfer}
\|\nu-(T_g)_\#\nu\|_{\TV}
\le
2\|\mu-\gamma\|_{\TV}
+
C_B\frac{\|h\|_{\HS}}{\sigma_n\sqrt{m/N}}
+
\sqrt3\,C_B\sqrt N\,\|h\|_{\HS}
+
e^{-c_BN}.
\end{equation}
Next, Proposition~\ref{prop:gaussian-approx} gives
\[
\|\mu-\gamma\|_{\TV}
\le
C_{\mathrm{GA}}\frac{\sigma_n m^2}{\sqrt{m/N}}
+
e^{-c_{\mathrm{GA}}m},
\]
where \(c_{\mathrm{GA}},C_{\mathrm{GA}}>0\) are the constants from that proposition.
Substituting this into \eqref{eq:block-transfer}, we get
\begin{align*}
\|\nu-(T_g)_\#\nu\|_{\TV}
\le\;&
2C_{\mathrm{GA}}\frac{\sigma_n m^2}{\sqrt{m/N}}
+
C_B\frac{\|h\|_{\HS}}{\sigma_n\sqrt{m/N}}\\
&+
\sqrt3\,C_B\sqrt N\,\|h\|_{\HS}
+
2e^{-c_{\mathrm{GA}}m}
+
e^{-c_BN}.
\end{align*}

Finally, since \(\Delta=\sigma_n Z\) with \(Z\sim\mathcal N(0,I_m)\), a standard
Gaussian tail bound gives an absolute constant \(c_E>0\) such that
\[
\Prob(E^c)\le e^{-c_Em}.
\]
Combining this with \eqref{eq:block-decomposition}, we conclude that
\begin{align}
\label{eq:finest}
\|\nu_A-(T_g)_\#\nu_A\|_{\TV}
\le\;&
2C_{\mathrm{GA}}\frac{\sigma_n m^2}{\sqrt{m/N}}
+
C_B\frac{\|h\|_{\HS}}{\sigma_n\sqrt{m/N}} \notag\\
&+
\sqrt3\,C_B\sqrt N\,\|h\|_{\HS}
+
2e^{-c_{\mathrm{GA}}m}
+
e^{-c_BN}
+
2e^{-c_Em}.
\end{align}

It remains to prove the estimate \eqref{eqn:tvo(1)bd}. Assume
\[
\|h\|_{\HS}\le c_{\log}\omega_n=c_{\log}N^{-4}.
\]
Under \eqref{eq:scales},
\[
\frac{\sigma_n m^2}{\sqrt{m/N}}
\asymp
\frac{\log^{3/2}N}{N}
=o(1),
\]
\[
\frac{\|h\|_{\HS}}{\sigma_n\sqrt{m/N}}
\le
\frac{c_{\log}N^{-4}}{N^{-3}\sqrt{m/N}}
=
\frac{c_{\log}}{N\sqrt{m/N}}
=o(1),
\]
and
\[
\sqrt N\,\|h\|_{\HS}
\le
c_{\log}N^{-7/2}
=o(1).
\]
The exponential terms are also $o(1)$. Therefore, by \eqref{eq:finest},
\[
\|\nu_A-(T_g)_\#\nu_A\|_{\TV}=o(1),
\]
which proves \eqref{eqn:tvo(1)bd} and the proof is finished.
\end{proof}

\section{Proof of the main theorem}
\label{sec:proof-main}

\begin{proof}[Proof of Theorem~\ref{thm:main}]
Fix arbitrary $x,y\in \SO(n)$. For the first stage, apply Proposition~\ref{prop:scaffold} with $B=4$.
Thus there exists
\[
t_0=O(N\log n)
\]
and a coupling
\[
(\widetilde X_t,\widetilde Y_t)_{0\le t\le t_0}
\]
of two copies of Kac's walk with
\[
\widetilde X_0=x,
\qquad
\widetilde Y_0=y,
\]
such that, with
\[
\mathcal E_0:=\{D_{\HS}(\widetilde X_{t_0},\widetilde Y_{t_0})\le \omega_n\},
\qquad
\omega_n=N^{-4},
\]
we have
\begin{equation}
\label{eq:first-stage-event}
\Prob(\mathcal E_0^c)=o(1).
\end{equation}

We now define the second stage. Sample an independent fresh block
\[
A=((i_1,\theta_1),\dots,(i_m,\theta_m))
\]
as in Definition~\ref{defn:randupdate}, and condition on
\[
(\widetilde X_{t_0},\widetilde Y_{t_0},A).
\]
Set
\[
g:=\widetilde X_{t_0}^{-1}\widetilde Y_{t_0},
\qquad
z:=L_A(0)\widetilde X_{t_0}.
\]
Since \(\widetilde Y_{t_0}=\widetilde X_{t_0}g\) and
\(L_A(\delta)=H_A(\delta)L_A(0)\), we have
\[
L_A(\delta)\widetilde X_{t_0}=H_A(\delta)z,
\qquad
L_A(\delta)\widetilde Y_{t_0}=H_A(\delta)zg.
\]

Next, as in Definition~\ref{defn:randupdate2}, let \(\nu_A\) be the block law from
\eqref{eq:nuA-def}:
\[
\nu_A=\Law(H_A(\Delta)\mid A),
\qquad
\Delta\sim\mathcal N(0,\sigma_n^2I_m),
\]
with \(\Delta\) independent of everything else. By the definition of \(\nu_A\) and
\(L_A(\delta)=H_A(\delta)L_A(0)\), conditionally on
\((\widetilde X_{t_0},\widetilde Y_{t_0},A)\),
\[
\Law(L_A(\Delta)\widetilde X_{t_0})=(T_z)_\#\nu_A,
\qquad
\Law(L_A(\Delta)\widetilde Y_{t_0})=(T_{zg})_\#\nu_A.
\]

Now define
\[
(X_{t_0+m},Y_{t_0+m})
\]
conditionally on \((\widetilde X_{t_0},\widetilde Y_{t_0},A)\) to be a
\emph{maximal coupling} of these two conditional laws. Then
\[
\Law(X_{t_0+m}\mid \widetilde X_{t_0},\widetilde Y_{t_0},A)
=
(T_z)_\#\nu_A,
\]
\[
\Law(Y_{t_0+m}\mid \widetilde X_{t_0},\widetilde Y_{t_0},A)
=
(T_{zg})_\#\nu_A,
\]
and
\begin{equation}
\label{eq:conditional-mismatch}
\Prob\!\bigl(X_{t_0+m}\neq Y_{t_0+m}\mid \widetilde X_{t_0},\widetilde Y_{t_0},A\bigr)
=
\|(T_z)_\#\nu_A-(T_{zg})_\#\nu_A\|_{\TV}.
\end{equation}
Averaging over the independent block \(A\), Lemma~\ref{lem:marginal} gives
\[
\Law(X_{t_0+m}\mid \widetilde X_{t_0},\widetilde Y_{t_0})
=
K^m(\widetilde X_{t_0},\cdot),
\qquad
\Law(Y_{t_0+m}\mid \widetilde X_{t_0},\widetilde Y_{t_0})
=
K^m(\widetilde Y_{t_0},\cdot).
\]

We next estimate this mismatch probability on the good event
\[
\mathcal E_0\cap \mathcal G_A.
\]
On \(\mathcal E_0\), bi-invariance of \(D_{\HS}\) gives
\[
D_{\HS}(\Id,g)
=
D_{\HS}(\widetilde X_{t_0},\widetilde Y_{t_0})
\le
\omega_n.
\]
For all sufficiently large \(n\), Lemma~\ref{lem:local-log} applies, so
\[
g=\exp(h)
\qquad\text{with}\qquad
\|h\|_{\HS}\le c_{\log}\omega_n.
\]
Set
\[
\widehat g:=zgz^{-1}.
\]
Since \(g=\exp(h)\), we have
\[
\widehat g=\exp(\widehat h),
\qquad
\widehat h:=\Ad(z)h.
\]
Since the adjoint action is orthogonal on \(\so(n)\),
\[
\|\widehat h\|_{\HS}=\|h\|_{\HS}\le c_{\log}\omega_n.
\]

Now \(T_{z^{-1}}\) is a measurable bijection of \(\SO(n)\), so total variation is
invariant under pushforward by \(T_{z^{-1}}\). Hence
\[
\|(T_z)_\#\nu_A-(T_{zg})_\#\nu_A\|_{\TV}
=
\|\nu_A-(T_{\widehat g})_\#\nu_A\|_{\TV}.
\]
Since \(A\in\mathcal G_A\) on the good event, Proposition~\ref{prop:block-translate},
applied with \(h=\widehat h\), yields
\[
\|\nu_A-(T_{\widehat g})_\#\nu_A\|_{\TV}=o(1)
\]
uniformly on \(\mathcal E_0\cap \mathcal G_A\). 
Therefore, by \eqref{eq:conditional-mismatch},
\[
\Prob\!\bigl(X_{t_0+m}\neq Y_{t_0+m}\mid \widetilde X_{t_0},\widetilde Y_{t_0},A\bigr)
=
o(1)
\qquad\text{on }\mathcal E_0\cap \mathcal G_A.
\]
Taking expectations and splitting according to \(\mathcal E_0\cap\mathcal G_A\),
\begin{align*}
\Prob(X_{t_0+m}\neq Y_{t_0+m})
&\le
\Prob(\mathcal E_0^c)+\Prob(\mathcal G_A^c) \\
&\quad+
\E\!\left[
\1_{\mathcal E_0\cap\mathcal G_A}
\Prob\!\bigl(X_{t_0+m}\neq Y_{t_0+m}\mid
\widetilde X_{t_0},\widetilde Y_{t_0},A\bigr)
\right]
=o(1),
\end{align*}
where we used \eqref{eq:first-stage-event}, Proposition~\ref{prop:covariance},
and the preceding uniform bound.

Since \(X_{t_0+m}\sim K^{t_0+m}(x,\cdot)\) and
\(Y_{t_0+m}\sim K^{t_0+m}(y,\cdot)\), the coupling inequality gives
\[
\|K^{t_0+m}(x,\cdot)-K^{t_0+m}(y,\cdot)\|_{\TV}
\le
\Prob(X_{t_0+m}\neq Y_{t_0+m})
=
o(1),
\]
uniformly in \(x,y\in\SO(n)\).

Integrating the preceding uniform bound over $y\sim\mu$ and using stationarity of Haar measure,
\[
\mu=\int_{\SO(n)}K^{t_0+m}(y,\cdot)\,\mu(\dd y),
\]
we obtain
\[
\sup_{x\in\SO(n)}
\|K^{t_0+m}(x,\cdot)-\mu\|_{\TV}
=
o(1).
\]
Finally,
\[
t_0+m
=
O(N\log n)+O(N\log N)
=
O(n^2\log n),
\]
because \(N=\binom n2\) and \(\log N\asymp \log n\). This proves the theorem.
\end{proof}

\section*{Acknowledgments}

Much of this paper was developed in collaboration with GPT Pro 5.4. In particular, the idea of using matrix martingale inequalities and the results of Tropp came from conversations with GPT. NSP thanks Martin Hairer, Jonathan Mattingly, Gareth Roberts and Andrew Stuart for introducing him to the ideas of Malliavin calculus and the CRiSM postdoctoral fellowship, and Pedja Neskovic and Sriram Subramanian for long standing support. This work was funded by ONR. AS thanks Persi Diaconis for introducing him to this problem and Yunjiang Jiang for many conversations about it. Both authors thank Vinod Vaikuntanathan for helpful discussions.

\bibliographystyle{alpha}
\bibliography{refs}

\end{document}